\def\l{\left}
\def\r{\right}
\def\LL{{\mathcal{L}}}
\def\RR{{\mathbb{R}}}
\def\DD{{\cal{D}}}
\def\OO{{\cal{O}}}
\def\SS{{\cal{S}}}
\def\R#1{$(\ref{#1})$}
\newcommand{\bb}[1]{\begin{equation}\label{#1}}
\newcommand{\ee}{\end{equation}}
\newcommand{\bbb}{\begin{eqnarray}}
\newcommand{\eee}{\end{eqnarray}}
\newcommand{\bbbb}{\begin{eqnarray*}}
\newcommand{\eeee}{\end{eqnarray*}}
\newcommand{\tT}{\intercal}
\newcommand{\nnn}{\nonumber}
\newcommand{\no}{\noindent}
\definecolor{green1}{rgb}{0.1,0.5,0.0}
\newtheorem{thm}{Theorem}
\newtheorem{lemma}{Lemma}
\newtheorem{cor}{Corollary}
\theoremstyle{remark}
\newtheorem{rem}{Remark}
\theoremstyle{define}
\newtheorem{define}{Definition}
\newcommand{\clearallnum}{
    \numberwithin{equation}{section} \setcounter{equation}{0}
    \numberwithin{thm}{section} \setcounter{thm}{0}
    \numberwithin{lemma}{section} \setcounter{lemma}{0}
    \numberwithin{cor}{section} \setcounter{cor}{0}
    \numberwithin{rem}{section} \setcounter{rem}{0}
    \numberwithin{define}{section} \setcounter{define}{0}}
\begin{document}


\begin{frontmatter}

\title{On the positivity, monotonicity, and stability of a semi-adaptive LOD method 
for solving three-dimensional degenerate Kawarada equations}
\author{Joshua L. Padgett\footnote{Principal and corresponding author.
Email address: Josh\underline{~}Padgett@baylor.edu} and 
Qin Sheng\footnote{The second author is supported in part by a URC Research Award 
(No. 3033-0248/2014) from Baylor University.}}
\address{Department of Mathematics and
Center for Astrophysics, Space Physics and Engineering Research\\
Baylor University, One Bear Place, Waco, TX 76798-7328}

\begin{abstract}
This paper concerns the numerical solution of three-dimensional degenerate Kawarada
equations. These partial differential equations possess highly nonlinear source terms, and 
exhibit strong quenching singularities which pose severe challenges to the design and 
analysis of highly reliable schemes.  Arbitrary fixed nonuniform spatial grids, which are 
not necessarily symmetric, are considered throughout this study. The numerical solution 
is advanced through a semi-adaptive Local One-Dimensional (LOD) integrator. The 
temporal adaptation is achieved via a suitable arc-length monitoring mechanism. Criteria 
for preserving the positivity and monotonicity are investigated and acquired. The 
numerical stability of the splitting method is proven in the von Neumann sense 
under the spectral norm. Extended stability expectations are proposed and investigated.
\end{abstract}

\begin{keyword}
Kawarada equations, quenching singularity, degeneracy, nonuniform grids, temporal 
adaptation, splitting method, positivity, monotonicity, stability
\end{keyword}
\end{frontmatter}

\noindent AMS Subject Claasifications - 65K20, 65M50, 35K65, 35B40

\section{Introduction} \clearallnum
Let $\DD=(0,a)\times (0,b)\times (0,c)\subset\RR^3,$ where $a,b,c>0,$ and $\partial\DD$ be its boundary. 
Denote $\Omega=\DD\times (t_0,~T),~\SS=\partial \DD\times (t_0,~T)$ for given $0\le t_0<T<\infty.$ 
We consider the following degenerate Kawarada problem,
\bbb
&& s(x,y,z) u_t=u_{xx}+u_{yy}+u_{zz}+f(u),~~~(x,y,z,t)\in\Omega,\label{b1}\\
&& u(x,y,z,t)=0,~~~(x,y,z,t)\in \SS,\label{b2}\\
&& u(x,y,z,t_0)=u_0(x,y,z),~~~(x,y,z)\in \DD,\label{b3}
\eee
where $s(x,y,z)=\l(x^2+y^2+z^2\r)^{q/2},~q\in[0,2].$ 
The nonlinear source function, $f(u),$ 
is strictly increasing for $0\leq u<1$ with
$$f(0)=f_0>0,~~\lim_{u\rightarrow 1^-}f(u)=\infty.$$
In idealized thermal combustion applications \cite{Acker2,Bebernes_89,Sheng1}, 
$u$ represents the temperature in the combustion 
channel, and the $x$-, $y$-, and $z$-coordinates coincide with the channel walls. The initial temperature $0 \le u_0 \ll 1$ is 
typically chosen to be small. The function $s(x,y,z)$ represents certain singularities in the temperature transportation speed 
within the channel, which causes the degeneracy in the differential equation 
\R{b1} \cite{Beau1,Ock,Sheng21,Sheng3}. The solution 
$u$ of \R{b1}-\R{b3} is said to {\em quench\/} if there exists a finite time $T>0$ such that
\bb{a1}
\sup\l\{|u_t(x,y,z,t)|:(x,y,z)\in {\DD}\r\}\rightarrow\infty~\mbox{as}~t\rightarrow
T^{-}.
\ee
The value $T$ is then defined as the {\em quenching time\/} \cite{Acker1,Acker2,Levine}. 
It has been shown that a necessary condition for quenching to occur is
\bb{a2}
\max\l\{|u(x,y,z,t)|:(x,y,z)\in \bar{\DD}\r\}\rightarrow 1^{-}~\mbox{as}~t\rightarrow T^{-}.
\ee
Further, such a $T$ exists only when certain spatial references, such as the size and shape of $\DD,$ reach their critical 
limits. A domain $\DD^*$ is called the {\em critical domain\/} if the solution of \R{b1}-\R{b3} exists for all time when $\DD\subseteq 
\DD^*,$ and \R{a2} occurs when $\DD^*\subseteq \DD$ for a finite $T$ \cite{Levine}.

Systematic mathematical investigations of quenching phenomena can be traced back to Karawada's original work involving the
one-dimensional model equation \cite{Kawa}. It was observed that for any spatial domain $[0,a],$ there exists a unique value $a^*>0$ 
such that for $a<a^*,$ the solution of the equation exists globally; and for $a\geq a^*,$ there exists a finite time $T(a),$ 
such that $\lim_{t\rightarrow T(a)}\max_{0\leq x\leq a} u(x,t)=1.$ In the latter case, $u$ stops existing in finite time and this phenomenon 
is referred to as {\em quenching\/} \cite{Kawa,Levine,Sheng4}. 
There have been considerable developments in the study of Karawada equations, although discussions of 
multidimensional problems were extremely limited until recently.
In 1994, Chan and Ke proved that for a domain 
$\DD=(0,a)\times (0,b)\subset\RR^2,$ if $f,~f_u$ are nonnegative, then, for any fixed ratio $a/b,$ there exists a unique critical domain 
$\DD^*$ for \R{b1}-\R{b3}, and the solution of the differential equation problem is unique before quenching \cite{Chan2}. 
A numerical approximation of the relationship between $a/b$ and the areas of $\DD^*$ of a nondegenerate ($q=0$) problem 
was given. These results have been well supported by realistic physical processes, in particular in solid fuel combustion
\cite{Bebernes_89,Beau1,Sheng4}.

Numerous computational procedures, including moving mesh adaptive methods, have been constructed for solving 
blow-up and Kawarada problems in the past decades (interested readers are referred to 
\cite{Acker1,Cheng,Coyle,Sheng21,Sheng3} and 
references therein). Though in the former case, adaptations are frequently achieved via monitoring functions on the arc-length 
of the function $u;$ in the latter situation, adaptations are more likely to be built upon the arc-length of $u_t,$ since
it is directly proportional to $f(u),$ which blows up as $u$ quenches \cite{Chan2,Levine,Sheng15b}.

As reported in several recent investigations, when quenching locations can be predetermined, it is preferable 
to use nonuniform spatial grids throughout the computations \cite{Beau1,Lang2,Sheng3}. 
In this case,
key quenching characteristics such as the quenching time and critical domain, 
are more easily observed; Also important numerical properties of underlying algorithms, 
including the monotonicity, stability and
convergence, can be more precisely studied. 
To that end, this paper develops a temporally adaptive splitting 
scheme utilizing predetermined nonuniform spatial grids. The positivity, monotonicity, and stability of the method will be investigated. It is also observed that the impact of degeneracy is limited for
our implicit scheme. Our discussions will be organized as follows. In the next section, 
the semi-adaptive LOD scheme for solving \R{b1}-\R{b3} will be constructed and discussed. Then, in Section 3, 
criteria to guarantee the positivity of the numerical scheme will be determined. In Section 4, 
appropriate criteria for guaranteeing the monotonicity will be obtained. These two sections together serve
as the platform for carrying out investigations of stability. Section 5 is devoted to the 
stability analysis of the semi-adaptive LOD scheme. The analysis will first be carried out for a 
fully linearized scheme, and then a more realistic stability analysis is proposed without freezing the source term. 
Finally, concluding remarks and 
proposed future work will be given in Section 6. For now, no numerical studies of the three-dimensional degenerate Karawada problem will be given.

\section{Semi-adaptive LOD scheme} \clearallnum
Utilizing the transformations $\tilde{x}=x/a,~\tilde{y}=y/b,~\tilde{z}=z/c,$ and reusing the original variables 
for simplicity, we may reformulate \R{b1}-\R{b3} as
\bbb
&&u_t=\frac{1}{a^2\phi}u_{xx} + \frac{1}{b^2\phi}u_{yy} +
\frac{1}{c^2\phi}u_{zz} +g(u),~~(x,y,z,t)\in\Omega,~~~~~~\label{c1}\\
&&u(x,y,z,t)=0,~~(x,y,z)\in \SS,\label{c2}\\
&&u(x,y,z,t_0)=u_0,~~(x,y,z)\in \DD,\label{c2b}
\eee
where $g(u) = f(u)/\phi,~\phi=\phi(x,y,z) = \l(a^2x^2+b^2y^2+c^2z^2\r)^{q/2},$ and 
$\DD=(0,1)\times (0,1)\times (0,1)\subset\RR^3.$

Let $N_1,N_2,N_3\gg 1.$ We inscribe over $\bar{\DD}$ the following variable grid: 
$\DD_h=\l\{(x_i,y_j,z_k)| i=0,\dots,N_1+1; ~j=0,\dots,N_2+1;~k=0,\dots,N_3+1;
~x_0=y_0=\r.$\\$\l.z_0=0,~x_{N_1+1}=y_{N_2+1}\r.$ $\l.=z_{N_3+1}=1\r\}.$ 
Denote $h_{1,i} = x_{i+1}-x_i>0,~h_{2,j} = y_{j+1}-y_j>0,$ and 
$h_{3,k} = z_{k+1}-z_k>0$ for $1\leq i\leq N_1,~1\leq j\leq N_2,~1\leq k\leq N_3.$ 
Let $u_{i,j,k}(t)$ be an approximation of the solution of \R{c1}-\R{c2b} at  
$(x_i,y_j,z_k,t)$ and consider the following first-order finite differences \cite{Sheng3},
\bbbb
\l.\frac{\partial^2u}{\partial x^2}\r|_{i,j,k} &\approx& \frac{2u_{i-1,j,k}}{h_{1,i-1}(h_{1,i-1}+h_{1,i})}-\frac{2u_{i,j,k}}{h_{1,i-1}h_{1,i}}+\frac{2u_{i+1,j,k}}{h_{1,i}(h_{1,i-1}+h_{1,i})},\\
\l.\frac{\partial^2u}{\partial y^2}\r|_{i,j,k} &\approx& \frac{2u_{i,j-1,k}}{h_{2,j-1}(h_{2,j-1}+h_{2,j})}-\frac{2u_{i,j,k}}{h_{2,j-1}h_{2,j}}+\frac{2u_{i,j+1,k}}{h_{2,j}(h_{2,j-1}+h_{2,j})},\\
\l.\frac{\partial^2u}{\partial z^2}\r|_{i,j,k} &\approx& \frac{2u_{i,j,k-1}}{h_{3,k-1}(h_{3,k-1}+h_{3,k})}-\frac{2u_{i,j,k}}{h_{3,k-1}h_{3,k}}+\frac{2u_{i,j,k+1}}{h_{3,k}(h_{3,k-1}+h_{3,k})}.
\eeee
Further, denote
$v(t)=(u_{1,1,1},u_{2,1,1},\dots,u_{N_1,1,1},
u_{1,2,1},u_{2,2,1},\dots,u_{N_1,2,1},\ldots,$\\$
u_{1,N_2,1},u_{2,N_2,1},\dots,u_{N_1,N_2,1},
\ldots, u_{1,N_2,N_3},u_{2,N_2,N_3},\dots, u_{N_1,N_2,N_3})^{\tT}\in\RR^{N_1N_2N_3}$
and let $g(v)$ be a discretization of the nonhomogeneous term of \R{c1}. 
We obtain readily from \R{c1}-\R{c2b} the following semi-discretized system
\bbb
v'(t)&=& \sum_{\sigma=1}^3 M_{\sigma} v(t)+g(v(t)),~~~t_0<t<T,\label{c8}\\
v(t_0)&=&v_0,\label{c8b}
\eee
where
$$M_1=\frac{1}{a^2}B(I_{N_3}\otimes I_{N_2}\otimes T_1),~M_2=\frac{1}{b^2}B(I_{N_3}\otimes T_2
\otimes I_{N_1}),~M_3=\frac{1}{c^2}B(T_3\otimes I_{N_2}\otimes I_{N_1}),$$
$\otimes$ stands for the Kronecker product,
$I_{N_{\sigma}}\in\RR^{N_{\sigma}\times N_{\sigma}},~\sigma=1,2,3,$ are identity matrices, and
\bbbb
B&=&\mbox{diag}\l(\phi_{1,1,1}^{-1},\phi_{2,1,1}^{-1},...,\phi_{N_1,1,1}^{-1},\phi_{1,2,1}^{-1},...,
\phi_{N_1,N_2,N_3}^{-1}\r)\in\RR^{N_1N_2N_3\times N_1N_2N_3},\\
\phi_{i,j,k}&=& \l[a^2\l(\sum_{\ell=0}^{i-1}h_{1,\ell}\r)^2+b^2\l(\sum_{\ell=0}^{j-1}h_{2,\ell}\r)^2+
c^2\l(\sum_{\ell=0}^{k-1}h_{3,\ell}\r)^2\r]^{q/2},\\
T_{\sigma}&=&\l(\begin{array}{rrrrr}m_{\sigma,1}& n_{\sigma,1} &~&~&~\\ l_{\sigma,1} & m_{\sigma,2}& 
n_{\sigma,2}&~&~\\~&\cdots&\cdots&\cdots&~\\ ~&~& l_{\sigma,N_{\sigma}-2}& m_{\sigma,N_{\sigma}-1}& 
n_{\sigma,N_{\sigma}-1}\\ ~&~&~& l_{\sigma,N_{\sigma}-1}& m_{\sigma,N_{\sigma}}\end{array}\r)\in
\RR^{N_{\sigma}\times N_{\sigma}},~~~\sigma=1,2,3,
\eeee
and for the above
\bbbb
l_{\sigma,j} &=& \frac{2}{h_{\sigma,j}(h_{\sigma,j}+h_{\sigma,j+1})},~n_{\sigma,j} ~=~ \frac{2}{h_{\sigma,j}(h_{\sigma,j-1}
+h_{\sigma,j})},~~~j = 1,\dots,N_{\sigma}-1,\\
m_{\sigma,j} &=& -\frac{2}{h_{\sigma,j-1}h_{\sigma,j}},~~~j=1,\dots,N_{\sigma};~\sigma=1,2,3.
\eeee
The formal solution of \R{c8}, \R{c8b} can thus be written as
\bb{rs1}
v(t)=E(tC)v_0+\int_{t_0}^tE((t-\tau)C)g(v(\tau))d\tau,~~~t_0<t<T,
\ee
where $E(\cdot)=\exp(\cdot)$ is the matrix exponential and $C=\sum_{\sigma=1}^3M_{\sigma}$ \cite{Sheng1}.

In principle, different approximation techniques can be used to 
yield different splitting methods based on \R{rs1} \cite{Gold,Sheng1,Sheng15b}. 
Yet, we are particularly interested in approximating \R{rs1} via a trapezoidal rule and a
[1/1] Pad\'{e} approximation, $E(tC) = p(t) + \OO\l(t^2\r),$ 
where
$$
p(t)=\prod_{\sigma=1}^3\l(I-\frac{t}{2}M_{\sigma}\r)^{-1}\l(I+\frac{t}{2}M_{\sigma}\r),~~~t_0<t<T.
$$
The above leads to 
\bb{cc3}
v(t)=p(t)\l[v_0 + \frac{t}{2}g(v_0)\r] + \frac{t}{2}g(v(t)) + \OO\l((t-t_0)^2\r),~~~t\rightarrow t_0.
\ee
The above LOD algorithm provides a highly efficient way to compute numerical solutions of
multidimensional problems such as \R{c1}-\R{c2b} \cite{Gold,Scha,Sheng21,Sheng3}. 
Based on \R{cc3}, we obtain the 
following first order in space and time semi-adaptive LOD scheme: 
\bb{c3}
v_{\ell+1}=\l[\prod_{\sigma=1}^3\l(I-\frac{\tau_\ell}{2}M_{\sigma}\r)^{-1}
\l(I+\frac{\tau_\ell}{2}M_{\sigma}\r)\r]\l(v_\ell+\frac{\tau_\ell}{2}g(v_\ell)\r)+
\frac{\tau_\ell}{2}g(v_{\ell+1}),
\ee
where $v_{\ell}$ and $v_{\ell+1}$ are approximations of $v(t_{\ell})$ and $v(t_{\ell+1}),$ respectively, $v_0$
is the initial vector,
$t_{\ell}=t_0+\sum_{k=0}^{\ell-1}\tau_k,~\ell=0,1,2,\ldots,$ and $\{\tau_{\ell}\}_{\ell\ge 0}$ is a set of variable temporal steps 
determined by an adaptive procedure. In order to avoid a fully implicit scheme, $g(v_{\ell+1})$ may be
approximated by $g(w_{\ell}),$ where $w_{\ell}$ is an approximation to $v_{\ell+1},$ 
such as 
\bb{approx}
w_{\ell} = v_{\ell} + \tau_{\ell}(Cv_{\ell}+g(v_{\ell})),~~~0<\tau_{\ell}\ll 1,
\ee
in practical computations.

\begin{center}
{\epsfig{file=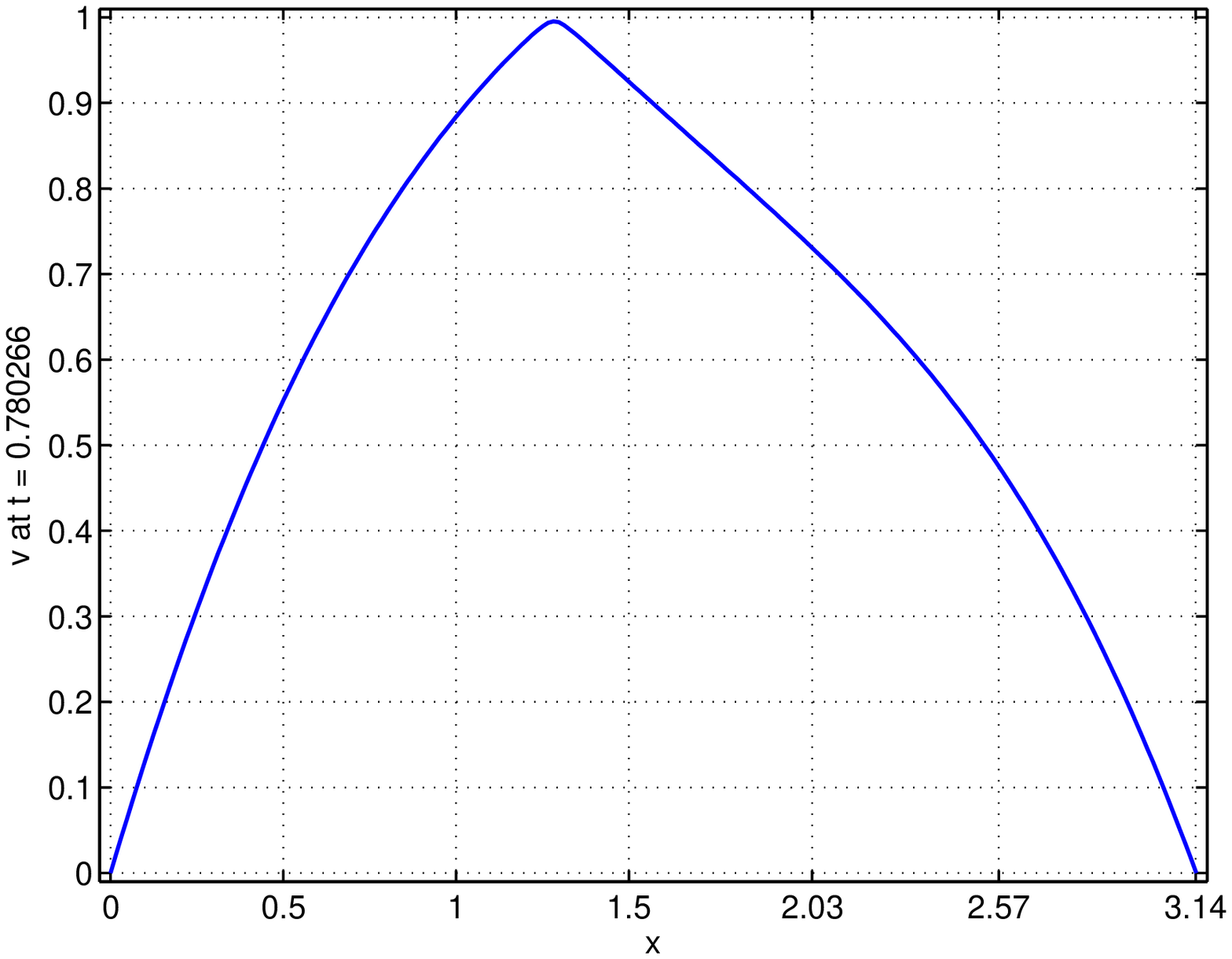,width=2.2in,height=1.38in}}
{\epsfig{file=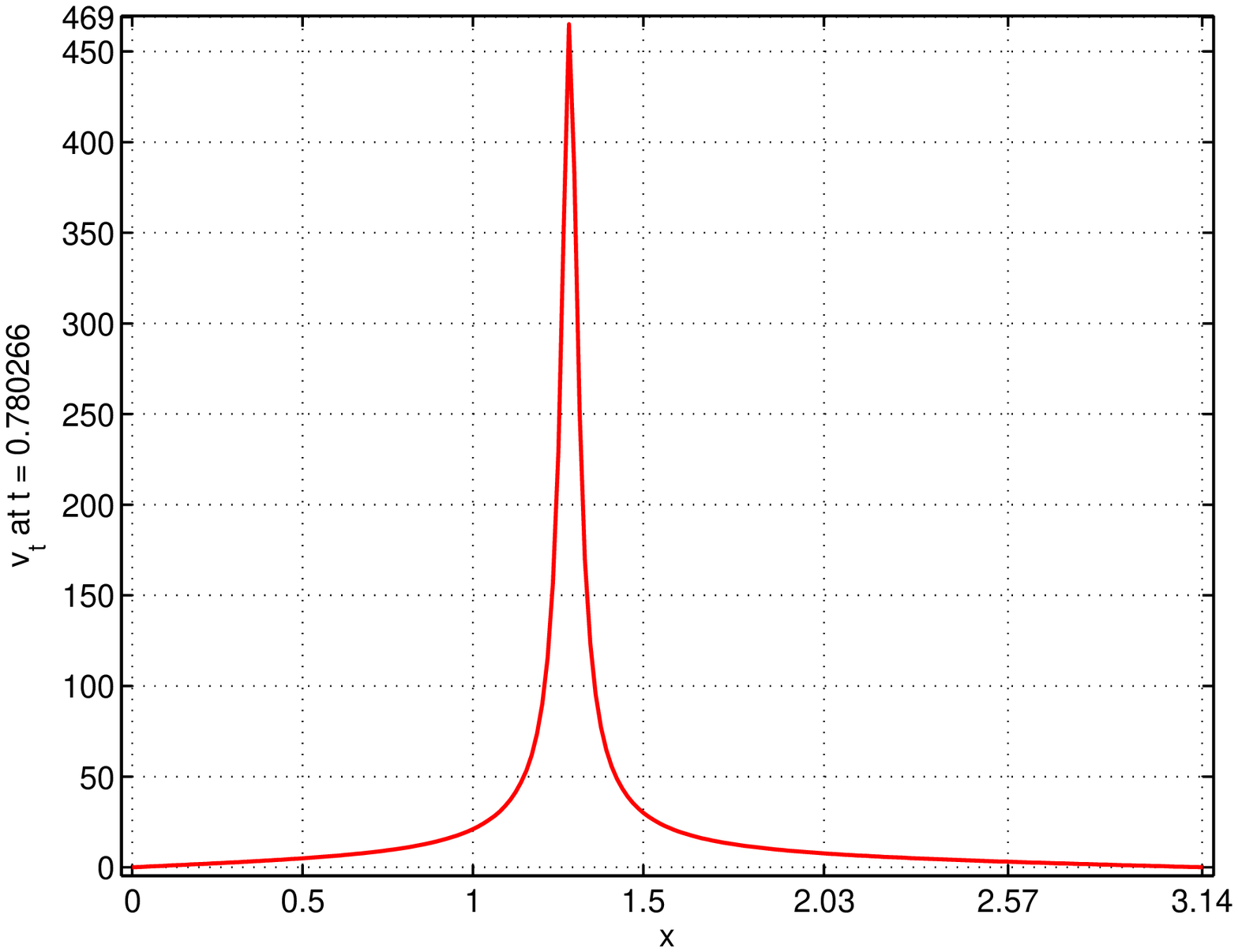,width=2.2in,height=1.38in}}

\parbox[t]{11.8cm}{\scriptsize{\bf Figure 1.} ~Numerical
solution (left) and its temporal derivative (right) immediately before quenching.
It is observed that as $\max_{x}v(x)\rightarrow 1^-,$ we have $\max_{x}v_t\gg 600.$
The computed quenching time is $T\approx0.780265747310047.$}
\end{center}

Due to a strong quenching singularity, the selection of proper nonuniform 
temporal steps $\tau_\ell$ is vital. As an illustration, in Figure 1, we show the numerical solution and
its temporal derivative of a typical one-dimensional Kawarada problem over the interval $[0, \pi].$ The initial function $u_0(x)=0.001\sin(x),~f(u)=1/(1-u),$ and homogeneous Dirichlet boundary condition
are employed. The degeneracy function utilized is $s(x)=x^p(\pi-x)^{1-p}$ with 
$p=(\sqrt{5}-1)/2.$ It is
evident that $v_t$ changes dramatically when compared with $v.$
Recalling \R{a1} and \R{a2}, we consider the following arc-length monitoring function on $v_t,$  
$$m\l(\frac{\partial v}{\partial t},t\r)=\sqrt{1+\l(\frac{\partial^2 v}{\partial t^2}\r)^2},~~~t_0<t<T.$$
Setting the two maximal arc-lengths in neighboring intervals $[t_{\ell-2},~t_{\ell-1}]$ and $[t_{\ell-1},~t_\ell]$ equal
\cite{Fur,Lang2,Sheng4,Sheng3}, we acquire the following quadratic equations from the above, 
$$\tau_\ell^2=\tau_{\ell-1}^2+\l(\frac{\partial v_{\ell-1}}{\partial t}-\frac{\partial v_{\ell-2}}{\partial t}\r)^2-
\l(\frac{\partial v_\ell}{\partial t}-\frac{\partial v_{\ell-1}}{\partial t}\r)^2,~~~\ell=1,2,3,\ldots,$$
with $\tau_0$ given. 

In the above temporal adaptation procedures, we may consider a minimal temporal step size controller 
$\tilde{\tau}_0,~0<\tilde{\tau}_0\ll\tau_0,$ to avoid sudden changes in grid 
movements or unnecessarily large numbers of computations. Further, let $\wedge$ be one 
of the operations $<,~\leq,~>,~\geq$ and $\alpha,\beta\in\RR^N.$ We assume the 
following notations in subsequent discussions:

\vspace{-2mm}

\begin{enumerate}
\item $\alpha\wedge\beta$ means $\alpha_i\wedge\beta_i,~i=1,2,\ldots,N;$
\vspace{-2mm}
\item $a\wedge\alpha$ means $a\wedge\alpha_i,~~i=1,2,\ldots,N,$
for any given scalar $a.$
\end{enumerate}

\section{Positivity} \clearallnum
The positivity property is one of the most profound characteristics of the solution of the Kawarada problem
\R{b1}-\R{b3} or \R{c1}-\R{c2b} \cite{Acker1,Acker2,Chan2,Levine}. Since positive computational solutions 
preserve the correct physical features of quenching phenomena, it is crucial that our numerical 
solution also possesses this feature.

\vspace{3mm}

\begin{lemma}
$\|T_{\sigma}\|_2 \le\max_{j=1,\dots,N_{\sigma}}\l\{{4}/{h_{\sigma,j}^2}\r\},~~~\sigma=1,2,3$
\end{lemma}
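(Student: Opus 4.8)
The plan is to bound the spectral norm by the more accessible maximum absolute row and column sums, through the elementary estimate $\|T_{\sigma}\|_2\le\sqrt{\|T_{\sigma}\|_1\,\|T_{\sigma}\|_\infty}$; this holds because $\|T_{\sigma}\|_2^2=\rho\l(T_{\sigma}^{\tT}T_{\sigma}\r)\le\|T_{\sigma}^{\tT}T_{\sigma}\|_1\le\|T_{\sigma}^{\tT}\|_1\,\|T_{\sigma}\|_1=\|T_{\sigma}\|_\infty\,\|T_{\sigma}\|_1$, where I use that the spectral radius is dominated by any induced matrix norm and that $\|A^{\tT}\|_1=\|A\|_\infty$. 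It therefore suffices to show that every absolute row sum and every absolute column sum of $T_{\sigma}$ is at most $\max_{j}\l\{4/h_{\sigma,j}^2\r\}$.

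Write $\mu_{\sigma}=\min_j h_{\sigma,j}$ for the smallest mesh width in the $\sigma$-direction (taken over all mesh widths, including the two adjacent to the boundary, so that $\max_j\l\{4/h_{\sigma,j}^2\r\}=4/\mu_{\sigma}^2$). From the defining formulas, together with $h_{\sigma,j}\ge\mu_{\sigma}$ and $h_{\sigma,j-1}+h_{\sigma,j}\ge 2\mu_{\sigma}$, one has the entrywise bounds $0<l_{\sigma,j}=\frac{2}{h_{\sigma,j}(h_{\sigma,j}+h_{\sigma,j+1})}\le\frac{1}{h_{\sigma,j}}\cdot\frac{1}{\mu_{\sigma}}\le\frac{1}{\mu_{\sigma}^2}$, and likewise $0<n_{\sigma,j}\le\frac{1}{\mu_{\sigma}^2}$ and $0<-m_{\sigma,j}=\frac{2}{h_{\sigma,j-1}h_{\sigma,j}}\le\frac{2}{\mu_{\sigma}^2}$. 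Since $T_{\sigma}$ is tridiagonal, each row contains at most one entry of each of the three types $l_{\sigma,\cdot},\ m_{\sigma,\cdot},\ n_{\sigma,\cdot}$ (the first and last rows contain only two of them), so every absolute row sum is at most $\frac{1}{\mu_{\sigma}^2}+\frac{2}{\mu_{\sigma}^2}+\frac{1}{\mu_{\sigma}^2}=\frac{4}{\mu_{\sigma}^2}$; the identical count, read column by column, bounds every absolute column sum by $\frac{4}{\mu_{\sigma}^2}$ as well. Hence $\|T_{\sigma}\|_\infty\le 4/\mu_{\sigma}^2$ and $\|T_{\sigma}\|_1\le 4/\mu_{\sigma}^2$, and the estimate above gives $\|T_{\sigma}\|_2\le 4/\mu_{\sigma}^2=\max_j\l\{4/h_{\sigma,j}^2\r\}$.

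The argument is largely mechanical, and the one point I would watch is the index bookkeeping at the ends of the grid: the mesh widths $h_{\sigma,0}$ and $h_{\sigma,N_{\sigma}}$ neighbouring the Dirichlet boundary enter the first and last rows of $T_{\sigma}$ through $m_{\sigma,1}$, $n_{\sigma,1}$, $l_{\sigma,N_{\sigma}-1}$, and $m_{\sigma,N_{\sigma}}$, and these must be included when forming $\mu_{\sigma}$ (equivalently, the maximum in the statement is read over all mesh widths). A more structural route, which I would note but not carry out, factors $T_{\sigma}=D_{\sigma}^{-1}L_{\sigma}$ with $D_{\sigma}=\mathrm{diag}\l((h_{\sigma,j-1}+h_{\sigma,j})/2\r)$ positive definite and $L_{\sigma}$ symmetric tridiagonal, whence $\|T_{\sigma}\|_2\le\|D_{\sigma}^{-1}\|_2\,\|L_{\sigma}\|_2\le(1/\mu_{\sigma})\,\rho(L_{\sigma})$, and $\rho(L_{\sigma})\le 4/\mu_{\sigma}$ follows from the Gershgorin circle theorem — again yielding the bound $4/\mu_{\sigma}^2$.
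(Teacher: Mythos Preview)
Your proof is correct and takes a genuinely different route from the paper. The paper forms $T_\sigma^{\tT}T_\sigma$ explicitly as a pentadiagonal matrix, writes out all of its entries, and then applies Ger\u{s}chgorin's circle theorem to that product to obtain $\rho(T_\sigma^{\tT}T_\sigma)\le 16/h_\sigma^4$ with $h_\sigma=\min_j h_{\sigma,j}$. Your argument bypasses the product entirely via the standard estimate $\|T_\sigma\|_2\le\sqrt{\|T_\sigma\|_1\|T_\sigma\|_\infty}$ and bounds the absolute row and column sums of $T_\sigma$ itself; this is shorter and avoids the bookkeeping of five-band Ger\u{s}chgorin disks. (In fact for an interior row the absolute row sum collapses exactly to $4/(h_{\sigma,j-1}h_{\sigma,j})$, a touch sharper than the crude entrywise bounds you wrote.) The paper's approach, while heavier, stays entirely within spectral-radius language and makes the control of $\rho(T_\sigma^{\tT}T_\sigma)$ explicit. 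Your caveat about the boundary widths $h_{\sigma,0}$ and $h_{\sigma,N_\sigma}$ is well taken: the paper sidesteps this by restricting its Ger\u{s}chgorin argument to the full five-element rows $j=3,\dots,N_1-2$, tacitly assuming the shorter boundary rows do not produce larger disks --- the same implicit assumption you flag.
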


\begin{proof}
Due to the similarity in structure, we only consider $T_1$ since the other two cases follow by similar arguments. Note that, in general, $T_1$ is not symmetric. However, $\|T_1\|_2^2 = \rho(T_1^{\tT}T_1)$ and $T_1^{\tT}T_1$ is symmetric with a bandwidth of five. Thus,
$$T_1^{\tT}T_1 = \l(\begin{array}{ccccccc} \tilde{m}_{1,1} & \tilde{n}_{1,1} & \tilde{N}_{1,1} & & & & \\ \tilde{l}_{1,1} & \tilde{m}_{1,2} & \tilde{n}_{1,2} & \tilde{N}_{1,2} & & & \\ \tilde{L}_{1,1} & \tilde{l}_{1,2} & \tilde{m}_{1,3} & \tilde{n}_{1,3} & \tilde{N}_{1,3} & & \\  & & \cdots & \cdots & \cdots & \cdots & \\ & & \tilde{L}_{1,N_1-4} & \tilde{l}_{1,N_1-3} & \tilde{m}_{1,N_1-2} & \tilde{n}_{1,N_1-2} & \tilde{N}_{1,N_1-2}\\ & & & \tilde{L}_{1,N_1-3} & \tilde{l}_{1,N_1-2} & \tilde{m}_{1,N_1-1} & \tilde{n}_{1,N_1-1}\\ & & & & \tilde{L}_{1,N_1-2} & \tilde{l}_{1,N_1-1} & \tilde{m}_{1,N_1} \end{array}\r)\in \RR^{N_1\times N_1},$$
where
\bbbb
\tilde{N}_{1,j} &=& \tilde{L}_{1,j} ~=~ l_{1,j} n_{1,j+1},~~~j=1,\dots,N_1-2,\\
\tilde{n}_{1,j} &=& \tilde{l}_{1,j} ~=~ m_{1,j} n_{1,j} + m_{1,j+1}l_{1,j},~~~j=1,\dots,N_1-1,\\
\tilde{m}_{1,j} &=& \l\{\begin{array}{ll}
m_{1,1}^2 + l_{1,1}^2, & j=1,\\
n_{1,j-1}^2 + m_{1,j}^2 + l_{1,j}^2, & j=2,\dots,N_1-1,\\
n_{1,N_1-1}^2+m_{1,N_1}^2, & j=N_1.
\end{array}\r.
\eeee
We may determine a bound on the spectral radius of $T_1^{\tT}T_1$ by using Ger\u{s}chgorin's circle theorem. In fact, 
only rows containing five nontrivial elements, {\em i.e., $j=3,\dots,N_1-2,$\/} need to be considered. To this end,
$$|\lambda_{1,j} - \tilde{m}_{1,j}|\le |\tilde{L}_{1,j-2}|+|\tilde{l}_{1,j-1}|+|\tilde{n}_{1,j}|+|\tilde{N}_{1,j}|,~~~j=3,\dots,N_1-2,$$
which gives
\bbb
-|m_{1,j-1}n_{1,j-1}+m_{1,j}l_{1,j-1}|-|m_{1,j}n_{1,j}+m_{1,j+1}l_{1,j}| &&\nnn\\
- |l_{1,j-2}n_{1,j-1}| - |l_{1,j}n_{1,j+1}| + n_{1,j-1}^2 + m_{1,j}^2 + l_{1,j}^2 & \le & \lambda_{1,j}\label{g1}
\eee
and
\bbb
\lambda_{1,j} & \le & |m_{1,j-1}n_{1,j-1}+m_{1,j}l_{1,j-1}|+|m_{1,j}n_{1,j}+m_{1,j+1}l_{1,j}| \nnn\\
&& + |l_{1,j-2}n_{1,j-1}| + |l_{1,j}n_{1,j+1}| + n_{1,j-1}^2 + m_{1,j}^2 + l_{1,j}^2 . \label{g2}
\eee
Let $h_1\equiv \min_{j=1,\dots,N_1} \{h_{1,j}\}.$ From \R{g2} we acquire that
\bbbb
\lambda_{1,j} & \le & \frac{2}{h_1^2}\cdot\frac{2}{2h_1^2} + \frac{2}{h_1^2}\cdot\frac{2}{2h_1^2} + \frac{2}{h_1^2}\cdot\frac{2}{2h_1^2} + \frac{2}{h_1^2}\cdot\frac{2}{2h_1^4} + \frac{4}{4h_1^4} + \frac{4}{4h_1^2}\\
&& + \l(\frac{2}{2h_1^2}\r)^2 + \l(\frac{2}{h_1^2}\r)^2 + \l(\frac{2}{2h_1^2}\r)^2~ = ~ \frac{16}{h_1^4}.
\eeee
Now, reverse \R{g1} and by the same token,
\bbbb
-\lambda_{1,j} & \le & |m_{1,j-1}n_{1,j-1}+m_{1,j}l_{1,j-1}|+|m_{1,j}n_{1,j}+m_{1,j+1}l_{1,j}| + |l_{1,j-2}n_{1,j-1}|\\
&&+ |l_{1,i}n_{1,j+1}| - n_{1,j-1}^2 - m_{1,j}^2 - l_{1,j}^2\\
& \le & |m_{1,j-1}n_{1,j-1}+m_{1,j}l_{1,j-1}|+|m_{1,j}n_{1,j}+m_{1,j+1}l_{1,j}| + |l_{1,j-2}n_{1,j-1}|\\
&&+ |l_{1,i}n_{1,j+1}| + n_{1,j-1}^2 + m_{1,j}^2 + l_{1,j}^2~\le~ \frac{16}{h_1^4}\;,
\eeee
where, once again, $h_1\equiv \min_{j=1,\dots,N_1} \{h_{1,j}\}.$ Thus, combining the bounds we have 
$\|T_1\|_2 \le \max_{i=1,\dots,N_1}\l\{{4}/{h_{1,j}^2}\r\}.$ The other bounds follow similarly.
\end{proof}

\begin{lemma}
Let
\bbbb
\beta_{\min} &=& \frac{h^2}{2\|B\|_2},~h~ =~ \min_{j=1,\dots,N_{\sigma};\;\sigma=1,2,3}
\{h_{\sigma,j}\},\\
\frac{1}{\|B\|_2} &=& \min_{i,j,k} \phi_{i,j,k} = \l[a^2h_{1,0}^2+b^2h_{2,0}^2+c^2h_{3,0}^2\r]^{q/2}.
\eeee
If
\bb{cfl}
\frac{\tau_\ell}{\beta_{\min}}<\min\l\{a^2,b^2,c^2\r\},
\ee
then the matrices
$$I-\frac{\tau_\ell}{2}M_\sigma,~I+\frac{\tau_\ell}{2}M_\sigma,~~~\sigma=1,2,3,$$
are nonsingular. Further, $I-\frac{\tau_\ell}{2}M_\sigma,~\sigma=1,2,3,$ are monotone and 
inverse-positive, and $I+\frac{\tau_\ell}{2}M_\sigma,~\sigma=1,2,3,$ are nonnegative.
\end{lemma}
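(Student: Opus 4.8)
The three indices $\sigma=1,2,3$ are treated identically, so the plan is to reduce every claim to the sign pattern and the row sums of the tridiagonal blocks $T_\sigma$ and then to invoke standard facts about $Z$-matrices, $M$-matrices, and diagonal dominance. Since $m_{\sigma,j}<0$ while $l_{\sigma,j},n_{\sigma,j}>0$, each $T_\sigma$ has strictly negative diagonal entries and nonnegative off-diagonal entries; Kronecker multiplication by identity matrices does not alter this pattern, and left multiplication by the positive diagonal matrix $B$ together with the positive scalars $a^{-2},b^{-2},c^{-2}$ preserves it, so $M_\sigma$ has strictly negative diagonal entries and nonnegative off-diagonal entries. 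Consequently $I-\frac{\tau_\ell}{2}M_\sigma$ is a $Z$-matrix with positive diagonal, whereas every off-diagonal entry of $I+\frac{\tau_\ell}{2}M_\sigma$ is automatically nonnegative, so for the latter only the diagonal entries $1+\frac{\tau_\ell}{2}(M_\sigma)_{pp}=1-\frac{\tau_\ell}{2}|(M_\sigma)_{pp}|$ need to be controlled.

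Next I would compute the row sums. A direct calculation with the formulas for $l_{\sigma,j},m_{\sigma,j},n_{\sigma,j}$ shows that every interior row of $T_\sigma$ has zero row sum and the first and last rows have strictly negative row sum, so $T_\sigma\mathbf{1}_{N_\sigma}\le 0$. Writing $\mathbf{1}_{N_1N_2N_3}=\mathbf{1}_{N_3}\otimes\mathbf{1}_{N_2}\otimes\mathbf{1}_{N_1}$ (henceforth $\mathbf{1}$) and using $(P\otimes Q)(x\otimes y)=(Px)\otimes(Qy)$, this transfers through the Kronecker factors, and after multiplication by $B$ and the positive scalars it yields $M_\sigma\mathbf{1}\le 0$. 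Hence $(I-\frac{\tau_\ell}{2}M_\sigma)\mathbf{1}=\mathbf{1}-\frac{\tau_\ell}{2}M_\sigma\mathbf{1}\ge\mathbf{1}>0$. A $Z$-matrix that carries some strictly positive vector to a strictly positive vector is a nonsingular $M$-matrix with nonnegative inverse; therefore $I-\frac{\tau_\ell}{2}M_\sigma$ is nonsingular and $(I-\frac{\tau_\ell}{2}M_\sigma)^{-1}\ge 0$, which is precisely the asserted monotonicity and inverse-positivity. I note that this part needs only $\tau_\ell>0$.

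The step-size restriction \R{cfl} is invoked for $I+\frac{\tau_\ell}{2}M_\sigma$. From $|m_{\sigma,j}|=2/(h_{\sigma,j-1}h_{\sigma,j})\le 2/h^2$ (with $h$ the smallest grid width) and $\phi_{i,j,k}^{-1}\le\|B\|_2$ one obtains, uniformly in $p$ and $\sigma$, the estimate $|(M_\sigma)_{pp}|\le 2\|B\|_2/(h^2\min\{a^2,b^2,c^2\})=1/(\beta_{\min}\min\{a^2,b^2,c^2\})$, so \R{cfl} forces $\frac{\tau_\ell}{2}|(M_\sigma)_{pp}|<\frac12$; combined with the nonnegative off-diagonals this gives $I+\frac{\tau_\ell}{2}M_\sigma\ge 0$. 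For the nonsingularity of $I+\frac{\tau_\ell}{2}M_\sigma$, observe that $M_\sigma\mathbf{1}\le 0$ implies that the sum of the off-diagonal entries of $M_\sigma$ in any row is at most $|(M_\sigma)_{pp}|$ (with equality on interior rows and strict inequality on the boundary rows); hence, in row $p$, the diagonal entry of $I+\frac{\tau_\ell}{2}M_\sigma$ minus the absolute value of the sum of its off-diagonal entries is at least $1-\tau_\ell|(M_\sigma)_{pp}|>0$ by \R{cfl}, so $I+\frac{\tau_\ell}{2}M_\sigma$ is strictly diagonally dominant by rows and \emph{a fortiori} nonsingular.

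I do not expect any serious obstacle. The only care required is the bookkeeping with the three Kronecker products when transferring the sign pattern and the row sums from $T_\sigma$ to $M_\sigma$, and being mindful of which conclusion needs which hypothesis: the $M$-matrix property of $I-\frac{\tau_\ell}{2}M_\sigma$ holds for every positive $\tau_\ell$, whereas the nonnegativity and nonsingularity of $I+\frac{\tau_\ell}{2}M_\sigma$ genuinely require \R{cfl}.
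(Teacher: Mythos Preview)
Your argument is correct. For $I-\frac{\tau_\ell}{2}M_\sigma$ you and the paper do essentially the same thing: both recognize it as a $Z$-matrix and invoke the row-sum (weak diagonal dominance) structure of $T_\sigma$ to conclude it is a nonsingular $M$-matrix, hence monotone and inverse-positive. Your additional remark that this part requires only $\tau_\ell>0$, not \R{cfl}, is accurate and slightly sharper than what the paper states.

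The genuine difference is in the treatment of $I+\frac{\tau_\ell}{2}M_\sigma$. The paper appeals to Lemma~3.1, the Ger\u{s}chgorin estimate $\|T_\sigma\|_2\le\max_j\{4/h_{\sigma,j}^2\}$, and deduces $\|\frac{\tau_\ell}{2}M_\sigma\|_2<1$ under \R{cfl}; nonsingularity then follows from the Neumann series, and nonnegativity is read off from the resulting entrywise bound on the diagonal. You instead bound the diagonal entries $|(M_\sigma)_{pp}|$ directly from the explicit formulas and use \R{cfl} to force $1+\frac{\tau_\ell}{2}(M_\sigma)_{pp}>\tfrac12$, then combine the row-sum inequality $M_\sigma\mathbf{1}\le 0$ with this bound to obtain strict row diagonal dominance. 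Your route is more elementary and entirely self-contained (it does not need the $T_\sigma^{\tT}T_\sigma$ analysis of Lemma~3.1), while the paper's route recycles a spectral-norm estimate that it also needs elsewhere. Both are valid; yours is arguably cleaner for this particular lemma.
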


\begin{proof}
First, note that
\bbbb
\l\|\frac{\tau_\ell}{2}M_1\r\|_2 & = & \frac{\tau_\ell}{2a^2}\|B(I_{N_3}\otimes I_{N_2}\otimes T_1)\|_2\\
& \le & \frac{\tau_\ell}{2a^2}\|B\|_2\|I_{N_3}\otimes I_{N_2}\otimes T_1\|_2~=~ \frac{\tau_\ell}{2a^2}\|B\|_2\|T_1\|_2\\
& \le & \frac{\tau_\ell}{a^2}\|B\|_2\max_{j=1,\dots,N_1}\l\{\frac{2}{h_{1,j}^2}\r\}
~ < ~ 1.
\eeee
Hence, $I+\frac{\tau_\ell}{2}M_1$ is nonsingular, and also nonnegative. Similar arguments give that $I+\frac{\tau_\ell}{2}M_2$ and $I+\frac{\tau_\ell}{2}M_3$ are nonsingular and nonnegative.

Now, consider $A = I-\frac{\tau_\ell}{2}M_1.$ As $A_{ij}\le 0$ for $i\neq j$ and the weak row sum criterion is satisfied, $A$ is monotone, and hence an inverse exists and is nonnegative. So, $A$ must be
inverse-positive \cite{Golub}. Similar arguments can be given for
$I-\frac{\tau_\ell}{2}M_2$ and $I-\frac{\tau_\ell}{2}M_3.$ This ensures the proof.
\end{proof}

We also need the following lemma.

\begin{lemma}
Let $A\in\RR^{n\times n}$ be nonsingular and nonnegative 
and $\beta\in\RR^{n}$ be positive. Then $A \beta > 0.$
\end{lemma}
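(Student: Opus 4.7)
The plan is to exploit the interplay between nonsingularity (which rules out zero rows) and nonnegativity (which forces the surviving entries to be strictly positive), and then read off positivity of $A\beta$ entry by entry.

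First I would argue that no row of $A$ can vanish. If the $i$th row of $A$ were entirely zero, then $\det A = 0$, contradicting the assumption that $A$ is nonsingular. Therefore, for every $i \in \{1,\dots,n\}$, there exists at least one column index $j_i$ such that $A_{i,j_i} \neq 0$. Combined with the hypothesis $A_{i,j} \geq 0$ for all $i,j$, this upgrades to $A_{i,j_i} > 0$.

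Next I would compute $(A\beta)_i = \sum_{j=1}^n A_{i,j}\beta_j$ and bound it from below. Each term is nonnegative because $A_{i,j}\geq 0$ and $\beta_j > 0$, and the particular term $A_{i,j_i}\beta_{j_i}$ is strictly positive. Hence $(A\beta)_i \geq A_{i,j_i}\beta_{j_i} > 0$. Since this holds for every $i$, we conclude $A\beta > 0$ in the componentwise sense used throughout the paper.

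There is no real obstacle here; the statement is essentially a bookkeeping exercise used to prepare the positivity argument for the LOD iterates in the next section. The only subtlety worth flagging is the distinction between "nonnegative matrix" (entrywise $\geq 0$) and "positive matrix" (entrywise $> 0$): nonsingularity is precisely what is needed to bridge that gap when acting on a strictly positive vector.
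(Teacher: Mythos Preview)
Your proof is correct and matches the paper's approach: the paper simply states that the result follows directly from the definitions, and your argument spells out exactly that verification (no zero rows by nonsingularity, hence at least one strictly positive entry per row, hence each component of $A\beta$ is strictly positive). Nothing to add.
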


\begin{proof}
The proof is a straightforward application of the definitions.
\end{proof}

\section{Monotonicity} \clearallnum
Another key characteristic which distinguishes a solution to a quenching problem from a solution to most blow-up problems is its monotonicity with respect to time $t\geq t_0$ \cite{Acker1,Acker2,Chan2,Levine,Sheng21}.
Thus, it is necessary to guarantee that our numerical solution preserves this property strictly while
solving the Kawarada equation \R{b1}-\R{b3} or \R{c1}-\R{c2b}.


\begin{lemma}
If \R{cfl} holds for all $\ell\ge k\geq 0,$ and
\begin{itemize}
\item[\rm(a)] $Cv_0 + \frac{1}{2}g(v_0) >0;$
\item[\rm(b)] $\l(I-\frac{\tau_0}{2}g_v(\xi_0)\r)^{-1} >0$
\end{itemize}
hold, then $v_{\ell+1}\ge v_\ell \mbox{ for all } \ell\ge 0.$ That is, the sequence $\l\{v_\ell\r\}_{\ell=0}^{\infty}$ is monotonically increasing.
\end{lemma}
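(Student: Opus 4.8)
The plan is to prove the statement by induction on $\ell$, establishing at each step that $v_{\ell+1}-v_\ell>0$. The scheme \R{c3} can be rearranged by moving the implicit term to the left: writing $p_\ell=\prod_{\sigma=1}^3\l(I-\frac{\tau_\ell}{2}M_{\sigma}\r)^{-1}\l(I+\frac{\tau_\ell}{2}M_{\sigma}\r)$, we have $v_{\ell+1}-\frac{\tau_\ell}{2}g(v_{\ell+1})=p_\ell\l(v_\ell+\frac{\tau_\ell}{2}g(v_\ell)\r)$. Subtracting the analogous identity at index $\ell-1$ (or, for the base case $\ell=0$, comparing against $v_0$ directly) and using a mean-value form $g(v_{\ell+1})-g(v_\ell)=g_v(\xi_\ell)(v_{\ell+1}-v_\ell)$ for a suitable intermediate point $\xi_\ell$, one obtains
\bb{mono1}
\l(I-\frac{\tau_\ell}{2}g_v(\xi_\ell)\r)(v_{\ell+1}-v_\ell)=p_\ell\l(I-\frac{\tau_{\ell-1}}{2}g_v(\xi_{\ell-1})\r)(v_\ell-v_{\ell-1})+\text{(step-size correction terms)}.
\ee
The idea is that every factor appearing on the right is either inverse-positive, nonnegative, or (by the inductive hypothesis) a positive vector, so Lemma 3 forces the left-hand side to be a positive vector; then applying $\l(I-\frac{\tau_\ell}{2}g_v(\xi_\ell)\r)^{-1}>0$ yields $v_{\ell+1}-v_\ell>0$.

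The first step I would carry out is the base case. Here \R{c3} gives $v_1-\frac{\tau_0}{2}g(v_1)=p_0\l(v_0+\frac{\tau_0}{2}g(v_0)\r)$, and I would rewrite $p_0\l(v_0+\frac{\tau_0}{2}g(v_0)\r)-v_0$ in terms of $Cv_0+\frac12 g(v_0)$ using the expansion \R{cc3}, namely that $p(\tau)=I+\tau C+\OO(\tau^2)$ so that $p_0 v_0-v_0\approx \tau_0 C v_0$ and $\frac{\tau_0}{2}p_0 g(v_0)\approx \frac{\tau_0}{2}g(v_0)$, giving $v_1-v_0-\frac{\tau_0}{2}(g(v_1)-g(v_0))\approx\tau_0\l(Cv_0+\frac12 g(v_0)\r)>0$ by hypothesis (a). Then hypothesis (b), together with Lemma 3, gives $v_1-v_0>0$. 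The second step is the inductive step: assuming $v_\ell-v_{\ell-1}>0$, I would establish \R{mono1}, verify that $\l(I-\frac{\tau_{\ell-1}}{2}g_v(\xi_{\ell-1})\r)(v_\ell-v_{\ell-1})$ is nonnegative (it equals a known nonnegative-matrix times a positive vector by the previous step's identity), note that $p_\ell$ is a product of inverse-positive and nonnegative matrices by Lemma 2 under \R{cfl}, conclude the right-hand side is positive via Lemma 3, and finally left-multiply by $\l(I-\frac{\tau_\ell}{2}g_v(\xi_\ell)\r)^{-1}$.

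The main obstacle I anticipate is twofold and both parts concern making \R{mono1} rigorous. First, the mean-value step: since $g$ is vector-valued, $g(v_{\ell+1})-g(v_\ell)=g_v(\xi_\ell)(v_{\ell+1}-v_\ell)$ requires either a componentwise mean value theorem with possibly different intermediate points per component, or an integrated Jacobian $\int_0^1 g_v(v_\ell+s(v_{\ell+1}-v_\ell))\,ds$; one must check that hypothesis (b), stated only for a single point $\xi_0$ and step $\tau_0$, actually transfers to all these intermediate Jacobians and all step sizes $\tau_\ell$ — presumably via monotonicity of $f$ (so $g_v\ge 0$ entrywise, diagonal) plus the bound $\tau_\ell\le\tau_0$ enforced by the adaptive procedure, which makes $I-\frac{\tau_\ell}{2}g_v$ an M-matrix. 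Second, controlling the $\OO((t-t_0)^2)$ / step-size correction terms: the scheme \R{c3} is the exact recursion, not the asymptotic formula \R{cc3}, so when I expand $p_\ell-I$ and $p_\ell g(v_\ell)-g(v_\ell)$ I must either keep exact expressions (using that $p_\ell$ itself is nonnegative, which already suffices for the sign of the dominant terms) or argue the remainders are dominated for $\tau_\ell\ll 1$. I expect the cleanest route is to avoid the expansion entirely in the inductive step — working directly with $p_\ell\ge 0$ and the telescoped identity \R{mono1} — and to use the expansion only in the base case where hypothesis (a) is phrased in terms of $Cv_0+\frac12 g(v_0)$.
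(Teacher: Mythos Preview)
Your approach is essentially the paper's: the base case uses the expansion $p_0=I+\tau_0 C+\OO(\tau_0^2)$ together with hypotheses (a) and (b), and the inductive step relies on the exact recursion plus the nonnegativity/inverse-positivity of the factors in $p_\ell$ supplied by Lemma~3.2. The only substantive difference is in how the inductive step is organized. The paper does not form your telescoped identity with the factor $\l(I-\frac{\tau_{\ell-1}}{2}g_v(\xi_{\ell-1})\r)(v_\ell-v_{\ell-1})$ on the right; instead it simply subtracts two consecutive instances of \R{c3} (tacitly with $\tau_\ell=\tau_{\ell-1}$) to obtain
\[
v_{\ell+1}-v_\ell \;=\; p_\ell\!\l(v_\ell-v_{\ell-1}+\tfrac{\tau_\ell}{2}\bigl(g(v_\ell)-g(v_{\ell-1})\bigr)\r)+\tfrac{\tau_\ell}{2}\bigl(g(v_{\ell+1})-g(v_\ell)\bigr),
\]
and then argues $g(v_\ell)-g(v_{\ell-1})>0$ directly from the componentwise strict monotonicity of $g$ (since $f$ is strictly increasing and $g_v$ is diagonal). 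This is a shorter route than yours, because it never needs to invoke or propagate the previous step's Jacobian factor. On the other hand, the concerns you raise are exactly the points the paper leaves informal: the componentwise mean-value step, the extension of (b) from $\ell=0$ to all $\ell$ (which works because $g_v$ is positive diagonal, so $I-\frac{\tau_\ell}{2}g_v$ is always an $M$-matrix for small $\tau_\ell$), the handling of the implicit term $g(v_{\ell+1})-g(v_\ell)$, and the step-size variation you flag --- the paper simply ignores the latter.
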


\begin{proof}
By \R{cfl} we have $\l\|\frac{\tau_k}{2}M_\sigma\r\|<1,$ and thus,
$$\l(I-\frac{\tau_k}{2}M_\sigma\r)^{-1} = I + \frac{\tau_k}{2}M_\sigma + \OO\l(\tau_k^2\r),~~~\sigma=1,2,3.$$
From \R{c3} and the above, we have
\bbb
v_{k+1} - v_k & = & \l[\prod_{\sigma=1}^3\l(I-\frac{\tau_k}{2}M_\sigma\r)^{-1}\l(I+\frac{\tau_k}{2}
M_\sigma\r)\r]\l(v_k+\frac{\tau_k}{2}g(v_k)\r)+\frac{\tau_k}{2}g(v_{k+1})-v_k\nnn\\
& = & \l[\prod_{\sigma=1}^3\l(I+\tau_k
M_\sigma\r)+\OO\l(\tau_k^2\r)\r]\l(v_k+\frac{\tau_k}{2}g(v_k)\r)+\frac{\tau_k}{2}g(v_{k+1})-v_k\nnn\\
& = & \l[\l(I+\tau_kC\r)+\OO\l(\tau_k^2\r)\r]\l(v_k+\frac{\tau_k}{2}g(v_k)\r)+\frac{\tau_k}{2}g(v_{k+1})-v_k\nnn\\
& = & \frac{\tau_k}{2}g(v_k) + \tau_kCv_k + \frac{\tau_k}{2}g(v_{k+1}) + \OO\l(\tau_k^2\r)\label{m1}
\eee
as $\tau_k\rightarrow 0.$ Note that $g(v_{k+1}) = g(v_k) + g_v(\xi_k)(v_{k+1}-v_k)$ for some $\xi_k\in \LL(v_{k+1};v_k),$ 
where $\LL(v_{k+1};v_k)$ is the line segment connecting $v_{k+1}$ to $v_k$ in $\RR^{N_1N_2N_3}.$ 
Using this fact and rearranging terms in \R{m1} we have
\bbbb
\l(I-\frac{\tau_k}{2}g_v(\xi_k)\r)(v_{k+1}-v_k) &=& \tau_k\l(Cv_k+\frac{1}{2}g(v_k)\r)+\OO\l(\tau_k^2\r),
\eeee
and thus,
\bbbb
v_{k+1}-v_k & = & \tau_k\l(I-\frac{\tau_k}{2}g_v(\xi_k)\r)^{-1}\l(Cv_k+\frac{1}{2}g(v_k)\r)+\OO\l(\tau_k^2\r).
\eeee

We now proceed by induction. Letting $k=0,$ we have
\bbbb
v_{1}-v_0 & = & \tau_0\l(I-\frac{\tau_0}{2}g_v(\xi_0)\r)^{-1}\l(Cv_0+\frac{1}{2}g(v_0)\r)+\OO\l(\tau_0^2\r).
\eeee
Thus, if $\tau_0$ is sufficiently small, we have $v_1-v_0 >0$ by our assumption and then Lemma 3.3. For the sake of induction, assume that the monotonicity holds 
for $k=\ell-1.$ Then we have
\bbbb
v_{\ell+1}-v_{\ell}& = & \l[\prod_{\sigma=1}^3\l(I-\frac{\tau_{\ell}}{2}M_\sigma\r)^{-1}\l(I+\frac{\tau_{\ell}}{2}
M_\sigma\r)\r]  \\
&&\times \l(v_\ell-v_{\ell-1}+\frac{\tau_{\ell}}{2}(g(v_\ell)-g(v_{\ell-1}))\r)
+\frac{\tau_{\ell}}{2}(g(v_{\ell+1})-g(v_\ell)).
\eeee
Note that $g(v)$ is strictly increasing since $f(v)$ is strictly increasing. Utilizing Lemmas 3.2-3.3 we find that 
$v_{\ell+1}-v_\ell>0$ if $v_\ell-v_{\ell-1}>0,$ which completes the induction.
\end{proof}

It is not uncommon to set $v_0\equiv 0$ in practical combustion simulations. The following corollary 
shows that in this case conditions in Lemma 4.1 are satisfied for $\ell = 0.$

\begin{cor}
If $v_0\equiv 0$ and $\tau_0 < \min\l\{\beta_{\min}\min\{a^2,b^2,c^2\}, 
\min_{i,j,k}\frac{2\phi_{i,j,k}}{f_v(\xi_0(x_i,y_j,z_k))}\r\},$ then conditions {\rm(a), (b)} are true.
\end{cor}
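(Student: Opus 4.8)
The plan is to verify conditions (a) and (b) of Lemma 4.1 directly, exploiting the simplifications that arise when $v_0\equiv 0$. First observe that the hypothesis on $\tau_0$ immediately implies the constraint \R{cfl} for $\ell=0$, since $\tau_0<\beta_{\min}\min\{a^2,b^2,c^2\}$ is exactly that condition; hence Lemmas 3.2 and 3.3 are available. For condition (a), with $v_0\equiv 0$ we have $Cv_0=0$, so $Cv_0+\frac{1}{2}g(v_0)=\frac{1}{2}g(0)$. Now $g(u)=f(u)/\phi$, and since $\phi_{i,j,k}>0$ for every interior node and $f(0)=f_0>0$, each component of $g(0)$ equals $f_0/\phi_{i,j,k}>0$. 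Thus $Cv_0+\frac{1}{2}g(v_0)>0$ componentwise, which is (a).

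For condition (b), I would show that $I-\frac{\tau_0}{2}g_v(\xi_0)$ is inverse-positive by arguing it is a monotone matrix (an $M$-matrix, or at least nonsingular with nonnegative inverse) in the sense already invoked in Lemma 3.2. Since $g(v)=f(v)/\phi$ acts componentwise through $f$, the Jacobian $g_v(\xi_0)$ is the diagonal matrix with entries $f_v(\xi_0(x_i,y_j,z_k))/\phi_{i,j,k}$, all nonnegative because $f$ is strictly increasing on $[0,1)$. Therefore $I-\frac{\tau_0}{2}g_v(\xi_0)$ is diagonal with entries $1-\frac{\tau_0}{2}\cdot\frac{f_v(\xi_0(x_i,y_j,z_k))}{\phi_{i,j,k}}$. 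The bound $\tau_0<\min_{i,j,k}\frac{2\phi_{i,j,k}}{f_v(\xi_0(x_i,y_j,z_k))}$ guarantees each such entry is strictly positive, so the matrix is diagonal with positive diagonal, hence its inverse is diagonal with positive diagonal, i.e. $\bigl(I-\frac{\tau_0}{2}g_v(\xi_0)\bigr)^{-1}>0$. This establishes (b).

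The one subtlety I would be careful about is that $\xi_0$ is an a priori unknown point on the segment $\LL(v_1;v_0)$, so the bound on $\tau_0$ is implicit; I would note that since $v_0\equiv 0$ and $v_1$ lies near $v_0$ for small $\tau_0$ (by \R{approx} or the expansion in \R{m1}), the segment stays in the region $0\le u<1$ where $f_v$ is finite, so the right-hand side $\min_{i,j,k}\frac{2\phi_{i,j,k}}{f_v(\xi_0(x_i,y_j,z_k))}$ is a well-defined positive quantity and the condition is meaningful. Aside from this bookkeeping point, the argument is a direct computation: the main "obstacle" is simply recognizing that $g_v$ is diagonal with nonnegative entries, after which both (a) and (b) reduce to the stated smallness condition on $\tau_0$.
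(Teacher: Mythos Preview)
Your proposal is correct and follows essentially the same approach as the paper: for (a) you use $v_0\equiv 0$ to reduce to $\frac{1}{2}g(0)>0$ via $f(0)=f_0>0$, and for (b) you observe that $g_v(\xi_0)$ is diagonal with nonnegative entries, so the bound on $\tau_0$ makes $I-\frac{\tau_0}{2}g_v(\xi_0)$ diagonal with strictly positive entries and hence inverse-positive. The paper does exactly this (writing out the diagonal inverse explicitly as $\mbox{diag}\bigl(2/(2-\tau_0 d_{i,j,k}^{(0)})\bigr)$), and your additional remarks about \R{cfl} and the implicit nature of $\xi_0$ are sound but not needed for the corollary as stated.
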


\begin{proof}
We first consider {\rm (a)}:
$$Cv_0+\frac{1}{2}g(v_0) = \frac{1}{2}g(0)>0$$
which follows from $f(0) = f_0 > 0.$ 

We now consider {\rm (b)}, and under these circumstances we need to show
$$\l(I-\frac{\tau_0}{2}g_v(\xi_0)\r)^{-1} >0.$$
First, we note that $g_v(\xi_0)$ is diagonal by definition, since
$$g(v) = \l(g_{1,1,1},\dots,g_{N_1,N_2,N_3}\r)^{\tT} =  
\l(\frac{f(v_{1,1,1})}{\phi_{1,1,1}},\dots,\frac{f(v_{N_1,N_2,N_3})}{\phi_{N_1,N_2,N_3}}\r)^{\tT}$$
and
$$g_v(v) =  \l(\begin{array}{ccc} \frac{\partial g_{1,1,1}}{\partial v_{1,1,1}} & \cdots & 
\frac{\partial g_{1,1,1}}{\partial v_{N_1,N_2,N_3}}\\ \vdots & \ddots & \vdots\\ 
\frac{\partial g_{N_1,N_2,N_3}}{\partial v_{1,1,1}} & \cdots & \frac{\partial 
g_{N_1,N_2,N_3}}{\partial v_{N_1,N_2,N_3}} \end{array}\r) = 
\mbox{diag}\l(\frac{f_v(v_{1,1,1})}{\phi_{1,1,1}},\dots,\frac{f_v(v_{N_1,N_2,N_3})}{\phi_{N_1,N_2,N_3}}\r).$$
Let us denote
$$g_v(\xi_0)
= \mbox{diag}\l(\frac{f_v((\xi_0)_{1,1,1})}{\phi_{1,1,1}},\dots,
\frac{f_v((\xi_0)_{N_1,N_2,N_3})}{\phi_{N_1,N_2,N_3}}\r)
 = \mbox{diag}\l(d_{1,1,1}^{(0)},\dots,d_{N_1,N_2,N_3}^{(0)}\r).$$
It follows readily that
$$\l(I-\frac{\tau_0}{2}g_v(\xi_0)\r)^{-1} = \mbox{diag}\l(\frac{2}{2-{\tau_0}d_{1,1,1}^{(0)}},\dots,
\frac{2}{2-{\tau_0}d_{N_1,N_2,N_3}^{(0)}}\r),$$
and (b) holds if
$$\tau_0 d_{i,j,k}^{(0)}< 2,~~~1\le i\le N_1,~1\le j\le N_2,~1\le k\le N_3.$$
Denote $d^{(0)}\equiv \max_{i,j,k}\l\{ d_{i,j,k}^{(0)}\r\},$ then $\tau_0 d^{(0)}< 2$ which leads to (b).
\end{proof}

\begin{lemma}
For any $\tau_\ell > 0$ we have
$$\l(I-\frac{\tau_\ell}{2}M_\sigma\r)x \ge x,~~~\sigma = 1,2,3,$$
where $x=(1,1,\dots,1)^{\tT}.$
\end{lemma}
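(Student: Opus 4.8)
The plan is to compute the vector $\l(I-\frac{\tau_\ell}{2}M_\sigma\r)x$ entrywise and show each component is at least $1$, which is equivalent to showing that every entry of $-\frac{\tau_\ell}{2}M_\sigma x$ is nonnegative, i.e.\ that each row sum of $M_\sigma$ is $\le 0$. Since $M_\sigma = \frac{1}{(\cdot)^2} B \l(I\otimes I\otimes T_\sigma\r)$ (with the appropriate placement of $T_\sigma$) and $B$ is a diagonal matrix with strictly positive entries $\phi_{i,j,k}^{-1}$, the sign of each row sum of $M_\sigma$ equals the sign of the corresponding row sum of the Kronecker-product matrix, which in turn is a row sum of $T_\sigma$. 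So the claim reduces to: every row sum of $T_\sigma$ is $\le 0$.

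First I would write out the row sums of the tridiagonal matrix $T_\sigma$ using the definitions of $l_{\sigma,j}$, $m_{\sigma,j}$, $n_{\sigma,j}$. For an interior row $j$ (with $2\le j\le N_\sigma-1$) the row sum is
$$
l_{\sigma,j-1} + m_{\sigma,j} + n_{\sigma,j}
= \frac{2}{h_{\sigma,j-1}(h_{\sigma,j-1}+h_{\sigma,j})} - \frac{2}{h_{\sigma,j-1}h_{\sigma,j}} + \frac{2}{h_{\sigma,j}(h_{\sigma,j-1}+h_{\sigma,j})},
$$
and a direct common-denominator computation shows this equals exactly $0$. For the first row ($j=1$) the row sum is $m_{\sigma,1}+n_{\sigma,1} = -\frac{2}{h_{\sigma,0}h_{\sigma,1}} + \frac{2}{h_{\sigma,1}(h_{\sigma,0}+h_{\sigma,1})}$, which simplifies to $-\frac{2}{h_{\sigma,0}(h_{\sigma,0}+h_{\sigma,1})} < 0$; likewise the last row ($j=N_\sigma$) gives $l_{\sigma,N_\sigma-1}+m_{\sigma,N_\sigma} = -\frac{2}{h_{\sigma,N_\sigma}(h_{\sigma,N_\sigma-1}+h_{\sigma,N_\sigma})} < 0$. (These boundary contributions reflect the homogeneous Dirichlet data.) Hence every row sum of $T_\sigma$ is $\le 0$, with strict inequality precisely on the rows corresponding to grid points adjacent to the boundary.

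Next I would lift this to $M_\sigma$. Because $T_\sigma x_{\sigma} \le 0$ componentwise (where $x_\sigma$ is the all-ones vector of the right size), and the Kronecker product with identities simply replicates these entries, we get $\l(I\otimes I\otimes T_\sigma\r)x \le 0$; multiplying on the left by the positive diagonal matrix $\frac{1}{(\cdot)^2}B$ preserves the sign, so $M_\sigma x \le 0$. Therefore $-\frac{\tau_\ell}{2}M_\sigma x \ge 0$ for any $\tau_\ell>0$, and adding $Ix = x$ yields $\l(I-\frac{\tau_\ell}{2}M_\sigma\r)x \ge x$, as claimed.

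The only mildly delicate point is the algebraic verification that the interior row sums of $T_\sigma$ vanish identically on an arbitrary nonuniform grid; this is a routine fraction manipulation but must be done carefully because the grid spacings are not assumed equal. Everything else is bookkeeping about how the positive diagonal scaling and the Kronecker products act on signs, which is immediate. I would present the interior-row identity explicitly and then state that the boundary rows only improve the inequality.
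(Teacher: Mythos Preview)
Your proposal is correct and follows essentially the same approach as the paper: both arguments amount to computing the row sums of the underlying tridiagonal finite-difference matrix and verifying that interior rows sum to exactly zero while boundary rows are strictly negative. The paper carries the positive diagonal scaling $\phi_{i,j,k}^{-1}$ through the componentwise calculation of $w=\l(I-\frac{\tau_\ell}{2}M_1\r)x$, whereas you factor it out first by noting that $B$ is diagonal with positive entries and so the sign of each row sum of $M_\sigma$ is determined by that of $T_\sigma$; this is a mild streamlining but not a different method.
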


\begin{proof}
We only need to show the case with
$$w = \l(I-\frac{\tau_\ell}{2}M_1\r)x = (w_{1,1,1},\dots,w_{i,j,k},\dots,w_{N_1,N_2,N_3})^{\tT}.$$
First, we observe that
\bbbb
w_{1,1,1} & = & \l(1-\frac{\tau_\ell}{2}\cdot\frac{-2}{a^2\phi_{1,1,1} h_{1,0}h_{1,1}}\r) - \frac{\tau_\ell}{2}\cdot\frac{2} {a^2\phi_{1,1,1}h_{1,1}(h_{1,0}+h_{1,1})}\\
& = & 1 + \frac{\tau_\ell}{a^2\phi_{1,1,1}}\l(\frac{1}{h_{1,0}h_{1,1}} - \frac{1}{h_{1,1}(h_{1,0}+h_{1,1})}\r)
 ~>~  1.
\eeee
Second, for $i=2,\dots,N_1-1$ we have
\bbbb
w_{i,1,1} & = & -\frac{\tau_\ell}{2}\cdot\frac{2}{a^2\phi_{i,1,1}h_{1,i-1}(h_{1,i-1}+h_{1,i})} + 
\l(1-\frac{\tau_\ell}{2}\cdot\frac{-2}{a^2\phi_{i,1,1} h_{1,i-1}h_{1,i}}\r)\\
&& - \frac{\tau_\ell}{2}\cdot\frac{2} {a^2\phi_{i,1,1}h_{1,i}(h_{1,i-1}+h_{1,i})}\\
& = & 1 + \frac{\tau_\ell}{a^2\phi_{i,1,1}}\l[\frac{-h_{1,i} + (h_{1,i-1}+h_{1,i}) - h_{1,i-1}}{h_{1,i-1}h_{1,i}(h_{1,i-1}+h_{1,i})}\r]
~ = ~ 1.
\eeee
Third, we have
\bbbb
w_{N_1,1,1} & = & -\frac{\tau_\ell}{2}\cdot\frac{2}{a^2\phi_{N_1,1,1}h_{1,N_1-1}(h_{1,N_1-1}+h_{1,N_1})} + 
\l(1-\frac{\tau_\ell}{2}\cdot\frac{-2}{a^2\phi_{N_1,1,1} h_{1,N_1-1}h_{1,N_1}}\r)\\
& = & 1 + \frac{\tau_\ell}{a^2\phi_{N_1,1,1}}\l[\frac{1}{h_{1,N_1}(h_{1,N_1-1}+h_{1,N_1})}\r] ~ > ~ 1.
\eeee
Hence, we conclude that $w_{i,1,1} \ge 1,~i=1,\dots,N_1.$ Similar arguments may show that all remaining 
elements of $w$ are also bounded below by 1. Therefore we have $w\ge x.$ Similar discussions may be 
utilized for the cases involving $M_2$ or $M_3.$
\end{proof}

\vspace{2mm}

In the next lemma we show that numerical quenching, {\em i.e.,\/} one or more components of $v_\ell$ reaching or 
exceeding unity, cannot occur immediately after the first time step under appropriate constraints. To this end, we denote 
$h=\max_{j=1,\ldots,N_{\sigma},~\sigma=1,2,3}\l\{h_{\sigma,j}\r\}.$

\begin{lemma}
If \R{cfl} holds and $h^2 < \frac{1}{2\min\{a^2,b^2,c^2\}}\min\l\{\frac{1}{f_0},
\frac{4}{f(\tau_0f_0/\phi_{\min})}\r\},$ then for given $v_0\equiv 0$, we have that all components of $v_1<1.$
\end{lemma}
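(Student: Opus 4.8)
The plan is to start from the closed-form expression for $v_1$ derived in the proof of Lemma 4.1, specialized to $v_0 \equiv 0$, and then bound each component. With $v_0 \equiv 0$ we have $g(v_0) = g(0) = (f_0/\phi_{1,1,1},\dots,f_0/\phi_{N_1,N_2,N_3})^{\tT}$ and $Cv_0 = 0$, so to leading order $v_1 = \frac{\tau_0}{2}(I - \frac{\tau_0}{2}g_v(\xi_0))^{-1}g(0)$; but for a clean $\mathcal{O}$-free bound it is better to work directly with the scheme \R{c3}, writing $v_1 = [\prod_{\sigma=1}^3(I-\frac{\tau_0}{2}M_\sigma)^{-1}(I+\frac{\tau_0}{2}M_\sigma)]\frac{\tau_0}{2}g(0) + \frac{\tau_0}{2}g(v_1)$, where I would approximate $g(v_1)$ by $g(w_0)$ with $w_0$ as in \R{approx} (so $w_0 = \tau_0(Cv_0 + g(v_0)) = \tau_0 g(0)$, whose largest component is $\tau_0 f_0/\phi_{\min}$). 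First I would estimate the propagator term: by Lemma 3.2 each $I + \frac{\tau_0}{2}M_\sigma$ is nonnegative and each $(I - \frac{\tau_0}{2}M_\sigma)^{-1}$ is inverse-positive, so the whole product is a nonnegative matrix; I then need an upper bound on its action on the nonnegative vector $\frac{\tau_0}{2}g(0)$.

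The key device for that upper bound is Lemma 4.2: since $(I - \frac{\tau_0}{2}M_\sigma)x \ge x$ for $x = (1,\dots,1)^{\tT}$, applying the inverse-positive matrix $(I-\frac{\tau_0}{2}M_\sigma)^{-1}$ to both sides gives $x \ge (I-\frac{\tau_0}{2}M_\sigma)^{-1}x$, i.e. the inverse is "sub-stochastic against $x$." Iterating over $\sigma = 1,2,3$ and using nonnegativity of $I+\frac{\tau_0}{2}M_\sigma$ together with a bound on $\|M_\sigma\|$ from Lemma 3.1, I would show the propagator applied to $\frac{\tau_0}{2}g(0)$ is bounded componentwise by something like $\frac{\tau_0}{2}\cdot\frac{f_0}{\phi_{\min}}(1 + \mathcal{O}(\tau_0 h^{-2}))$, and in any case by a constant multiple of $\tau_0 f_0/\phi_{\min}$; combined with the second term $\frac{\tau_0}{2}g(w_0)$, whose largest entry is $\frac{\tau_0}{2}f(\tau_0 f_0/\phi_{\min})/\phi_{\min}$, I get an explicit componentwise upper bound on $v_1$ of the form $\text{const}\cdot\tau_0\left(f_0 + f(\tau_0 f_0/\phi_{\min})\right)/\phi_{\min}$. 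Then I would invoke \R{cfl} to replace $\tau_0$ by $\beta_{\min}\min\{a^2,b^2,c^2\} = \frac{h_{\min}^2}{2\|B\|_2}\min\{a^2,b^2,c^2\}$ and note $1/\phi_{\min} = \|B\|_2$, so the bound becomes a multiple of $h_{\min}^2 \min\{a^2,b^2,c^2\}(f_0 + f(\tau_0 f_0/\phi_{\min}))$; the hypothesis $h^2 < \frac{1}{2\min\{a^2,b^2,c^2\}}\min\{1/f_0, 4/f(\tau_0 f_0/\phi_{\min})\}$ is precisely what forces this to be strictly less than $1$ (the two terms in the $\min$ controlling the $f_0$ contribution and the $f(\tau_0 f_0/\phi_{\min})$ contribution separately).

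The main obstacle I anticipate is controlling the propagator term $[\prod_\sigma (I-\frac{\tau_0}{2}M_\sigma)^{-1}(I+\frac{\tau_0}{2}M_\sigma)]\frac{\tau_0}{2}g(0)$ cleanly: the factors $I+\frac{\tau_0}{2}M_\sigma$ are nonnegative but not sub-stochastic against $x$ (their row sums can exceed $1$ because $M_\sigma$ has positive off-diagonal entries), so Lemma 4.2 alone does not immediately give the product $\le x$ componentwise. I expect to handle this either by carrying the $\|\frac{\tau_0}{2}M_\sigma\|_2 < 1$ estimate from the proof of Lemma 3.2 (so each factor has norm bounded by a fixed constant, say $2$ or $3$, absorbed into the "const"), or — cleaner — by using the Neumann/Taylor expansion $(I-\frac{\tau_0}{2}M_\sigma)^{-1}(I+\frac{\tau_0}{2}M_\sigma) = I + \tau_0 M_\sigma + \mathcal{O}(\tau_0^2)$ as in Lemma 4.1, applying it to $\frac{\tau_0}{2}g(0)$ and noting that $M_\sigma g(0)$ contributes only at order $\tau_0^2 h^{-2}$, which under \R{cfl} is again $\mathcal{O}(h^2)$ and absorbable. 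Either way the quantitative heart of the argument is the substitution of \R{cfl} and $1/\phi_{\min} = \|B\|_2$ to turn the $\tau_0/\phi_{\min}$ prefactor into $h_{\min}^2\min\{a^2,b^2,c^2\}$, after which the stated hypothesis on $h^2$ closes the estimate; the remaining components of $v_1$ (those not of the form $u_{i,1,1}$) are handled by the same computation, exactly as the "similar arguments" at the end of Lemma 4.2.
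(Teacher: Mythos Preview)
Your setup is right: the scheme \R{c3} with $v_0\equiv 0$, the approximation $g(v_1)\approx g(w_0)=g(\tau_0 f_0\xi)$ with $\xi=(\phi_{i,j,k}^{-1})$, the relevance of Lemma~4.2 for controlling the inverse factors, and the substitution $\tau_0/\phi_{\min}\le \tfrac{h^2}{2}\min\{a^2,b^2,c^2\}$ from \R{cfl}. The gap is in how you treat the product $\prod_\sigma(I-\tfrac{\tau_0}{2}M_\sigma)^{-1}(I+\tfrac{\tau_0}{2}M_\sigma)$. Your first workaround (bound it by a generic constant; each factor has norm at most $2$, so the product at most $8$) is too crude: with an $8$ in front, the resulting componentwise bound on $v_1$ is not forced below $1$ by the \emph{specific} constants $\tfrac12$ and $4$ appearing in the hypothesis. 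Your second workaround (Taylor expansion $I+\tau_0 M_\sigma+\OO(\tau_0^2)$) introduces $\OO(\tau_0^2)$ remainders that are never quantified, so it does not yield a rigorous componentwise inequality either.

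The paper's proof resolves the obstacle by treating the three factors \emph{sequentially} and climbing a ladder of thresholds $\tfrac14\to\tfrac12\to 1$. One sets $\tilde v_0=(I-\tfrac{\tau_0}{2}M_1)^{-1}(I+\tfrac{\tau_0}{2}M_1)\tfrac{\tau_0}{2}f_0\xi$ and $\bar v_0=(I-\tfrac{\tau_0}{2}M_2)^{-1}(I+\tfrac{\tau_0}{2}M_2)\tilde v_0$, so that $v_1-\tfrac{\tau_0}{2}g(\tau_0 f_0\xi)=(I-\tfrac{\tau_0}{2}M_3)^{-1}(I+\tfrac{\tau_0}{2}M_3)\bar v_0$. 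For the first stage one writes
\[
\tilde v_0-\tfrac14 x=(I-\tfrac{\tau_0}{2}M_1)^{-1}\bigl(s_1^++s_1^-\bigr),\qquad s_1^-=-\tfrac14(I-\tfrac{\tau_0}{2}M_1)x;
\]
Lemma~4.2 gives $s_1^-\le -\tfrac14 x$ \emph{directly}, while a norm estimate together with \R{cfl} gives $s_1^+\le \tfrac{h^2 f_0}{2}\min\{a^2,b^2,c^2\}\,x$, and the condition $h^2<1/(2f_0\min\{a^2,b^2,c^2\})$ forces $s_1^++s_1^-<0$, hence $\tilde v_0<\tfrac14 x$ by inverse-positivity. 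The second stage needs no hypothesis on $h$: only $\|I+\tfrac{\tau_0}{2}M_2\|\le 2$ is used to promote $\tfrac14$ to $\tfrac12$. The third stage absorbs the additive term $\tfrac{\tau_0}{2}g(\tau_0 f_0\xi)$ and uses the second half of the hypothesis (the $4/f(\tau_0 f_0/\phi_{\min})$ bound) together with the same Lemma~4.2 device to land below $1$. This stagewise structure --- Lemma~4.2 applied once per stage, the norm factor $2$ doubling the threshold once per stage --- is precisely what manufactures the constants $\tfrac12$ and $4$ in the statement; a single global bound on the full product cannot recover them.
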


\begin{proof}
If $v_0\equiv 0,$ then from \R{c3} we have
$$v_1 = \l[\prod_{\sigma=1}^3\l(I-\frac{\tau_0}{2}M_\sigma\r)^{-1}\l(I+\frac{\tau_0}{2}M_\sigma\r)\r]\frac{\tau_0}{2}g(0) + 
\frac{\tau_0}{2}g(v_1).$$
Using
$$g(v_1) \approx g(w_0) = g(v_0 + \tau_0(Cv_0+g(v_0))) = g(\tau_0f_0\xi),$$
where $\xi = \l(\phi_{1,1,1}^{-1},\dots,\phi_{N_1,N_2,N_3}^{-1}\r)^{\tT}\in\RR^{N_1N_2N_3},$ we have following 
decomposed connections
\bbb
\l(I-\frac{\tau_0}{2}M_1\r)\tilde{v}_0 & = & \l(I+\frac{\tau_0}{2}M_1\r)\frac{\tau_0}{2}f_0\xi,\label{37a}\\
\l(I-\frac{\tau_0}{2}M_2\r)\bar{v}_0 & = & \l(I+\frac{\tau_0}{2}M_2\r)\tilde{v}_0,\label{37b}\\
\l(I-\frac{\tau_0}{2}M_3\r)\l(v_1-\frac{\tau_0}{2}g(\tau_0f_0\xi)\r) & = & \l(I+\frac{\tau_0}{2}M_3\r)\bar{v}_0.\label{37c}
\eee
From \R{37a} we observe that
\bbbb
\tilde{v}_0 - \frac{1}{4}x & = & \l(I-\frac{\tau_0}{2}M_1\r)^{-1}\l[\l(I+\frac{\tau_0}{2}M_1\r)
\frac{\tau_0}{2}f_0\xi - \frac{1}{4}\l(I-\frac{\tau_{0}}{2}M_1\r)x\r]\\
& = & \l(I-\frac{\tau_0}{2}M_1\r)^{-1}\l(s_1^+ + s_1^-\r)
\eeee
for which
\bbbb
|s_1^+| & = & \l|\l(I+\frac{\tau_0}{2}M_1\r)\frac{\tau_0f_0}{2}\xi\r| ~\leq ~ \frac{\tau_0f_0}{2}\phi_{\min}^{-1}\l\|I+
\frac{\tau_0}{2}M_1\r\|_2\\
& < & \tau_0f_0\phi_{\min}^{-1} ~ < ~ \frac{h^2f_0}{2\|B\|_2}\min\l\{a^2,b^2,c^2\r\}\phi_{\min}^{-1} ~ \le ~ 
\frac{h^2f_0}{2}\min\l\{a^2,b^2,c^2\r\}.
\eeee
The above indicates that
$$s_1^+ \le \frac{h^2f_0}{2}\min\l\{a^2,b^2,c^2\r\}x.$$
On the other hand, according to Lemma 4.3 we have
$$s_1^- \le -\frac{1}{4}x,$$
and thus,
$$s_1^+ + s_1^- \le \l(\frac{h^2f_0}{2}\min\l\{a^2,b^2,c^2\r\} - \frac{1}{4}\r)x.$$
Since we wish each component of $s_1^+ + s_1^-$ to be negative, we require
\bb{v1a}
\frac{h^2f_0}{2}\min\l\{a^2,b^2,c^2\r\}-\frac{1}{4} <  0,~~\mbox{or}~~h  <  \frac{1}{\sqrt{2f_0\min\l\{a^2,b^2,c^2\r\}}}.
\ee
Now, recall \R{37b}. It follows that
\bbbb
\bar{v}_0 - \frac{1}{2}x & = & \l(I-\frac{\tau_0}{2}M_2\r)^{-1}\l[\l(I+\frac{\tau_0}{2}M_2\r)\tilde{v}_0 - \frac{1}{2}x\r]\\
& = & \l(I-\frac{\tau_0}{2}M_2\r)^{-1}\l(s_2^+ + s_2^-\r).
\eeee
Note that
$$\l|s_2^+\r|  =  \l|\l(I+\frac{\tau_0}{2}M_2\r)\tilde{v}_0\r| <  \frac{1}{4}\l\|I+\frac{\tau_0}{2}M_2\r\|_2\\ \leq \frac{1}{2},$$
which implies that $s_2^+ < \frac{1}{2}x.$ Therefore we arrive at
$$s_2^+ + s_2^-  <  \frac{1}{2}x - \frac{1}{2}x  = 0.$$
By the same token, based on \R{37c} we observe that
\bbbb
v_1 - x & = & \l(I-\frac{\tau_0}{2}M_3\r)^{-1}\l[\l(I+\frac{\tau_0}{2}M_3\r)\bar{v}_0 + \l(I-\frac{\tau_0}{2}M_3\r)\l(\frac{\tau_0}{2}g(\tau_0f_0\xi)-x\r)\r]\\
& = & \l(I-\frac{\tau_0}{2}M_3\r)^{-1}\l(s_3^++s_3^-\r).
\eeee
It can be seen that
\bbbb
\l|s_3^+\r| & = & \l|\l(I+\frac{\tau_0}{2}M_3\r)\bar{v}_0+\l(I-\frac{\tau_0}{2}M_3\r)\frac{\tau_0}{2}g(\tau_0f_0\xi)\r|\\
& \le & \max\l\{|\bar{v}_0|,\l|\frac{\tau_0}{2}g(\tau_0f_0\xi)\r|\r\}\l\|\l(I+\frac{\tau_0}{2}M_3\r)+\l(I-\frac{\tau_0}{2}M_3\r)\r\|_2\\
& < & \max\l\{1,\frac{h^2}{2}\|B\|_2^{-1}\min\l\{a^2,b^2,c^2\r\}f(\tau_0f_0\phi_{\min}^{-1})\phi_{\min}^{-1}\r\}\\
& = & \max\l\{1,\frac{h^2f(\tau_0f_0\phi_{\min}^{-1})}{2}\min\l\{a^2,b^2,c^2\r\}\r\},
\eeee
and the above indicates that
$$s_3^+ \le \max\l\{1,\frac{h^2f(\tau_0f_0\phi_{\min}^{-1})}{2}\min\l\{a^2,b^2,c^2\r\}\r\}x.$$
By Lemma 4.2 we conclude that $s_3^- \le -x,$ and therefore,
\bbbb
s_3^+ + s_3^- & \le & \max\l\{1,\frac{h^2f(\tau_0f_0\phi_{\min}^{-1})}{2}\min\l\{a^2,b^2,c^2\r\}\r\}x - x\\
& = & \max\l\{0,\frac{h^2f(\tau_0f_0\phi_{\min}^{-1})}{2}\min\l\{a^2,b^2,c^2\r\}-1\r\}x.
\eeee
Since we again wish each component of the above vector to be negative, we need
$$\frac{h^2f(\tau_0f_0\phi_{\min}^{-1})}{2}\min\l\{a^2,b^2,c^2\r\}-1 < 0,~~\mbox{or}~~ h^2  
< \frac{2}{f(\tau_0f_0/\phi_{\min})\min\l\{a^2,b^2,c^2\r\}}.$$
Hence $v_1-x\le 0$ follows immediately from \R{v1a} and the above. \end{proof}

\begin{rem}
We could generalize the above lemma to include nonzero initial vectors, if desired. Let $0<v_0<1/8$ be given. If \R{cfl} holds and 
$h^2 < \l(1-8\max\{v_0\}\r)/\l(2F\min\{a^2,b^2,c^2\}\r),$ where $F = f\l(\|v_0\|_2+
\tau_0(C\|v_0\|_2\r.$\\
$\l.+f(\|v_0\|_2\phi_{\min}^{-1}))\r),$ then all components of $v_1$ 
generated by \R{c3} are bounded above by unity. This follows by modifying the proof of Lemma 4.4.
\end{rem}

Combining above results we obtain the following theorem.

\begin{thm}
For any beginning step $\ell_0\geq 0$ if $\tau_\ell$ 
is sufficiently small for $\ell\ge \ell_0$ and
\begin{description}
\item[{\rm(i)}] \R{cfl} holds for all $\ell\ge \ell_0,$
\item[{\rm(ii)}] $h^2 < \frac{1}{2\min\{a^2,b^2,c^2\}}\min\l\{\frac{1}{f_0},
\frac{4}{f(\tau_0f_0/\phi_{\min})}\r\},$ where $h=\max_{j=1,\dots,N_\sigma,~\sigma=1,2,3}\{h_{\sigma,j}\},$
\item[{\rm(iii)}] $Cv_{\ell_0}+\frac{1}{2}g(v_{\ell_0})>0$ and 
$\l(I-\frac{\tau_{\ell_0}}{2}g_v(\xi_{\ell_0})\r)^{-1}>0,$
\end{description}
then the sequence $\l\{v_\ell\r\}_{\ell\ge {\ell_0}}$ produced by the semi-adaptive 
LOD scheme \R{c3} increases monotonically until unity is reached or exceeded by 
one or more components of the solution vector, i.e., until quenching occurs.
\end{thm}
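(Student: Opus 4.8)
The plan is to obtain the theorem by assembling the results already established in Sections~3 and 4, with the whole construction re-indexed so that the iteration starts at the arbitrary step $\ell_0$ rather than at step $0$. The central observation is that hypotheses {\rm(i)} and {\rm(iii)} are, word for word, the hypotheses of Lemma~4.1 with $0$ replaced by $\ell_0$ --- condition {\rm(iii)} being exactly {\rm(a)} and {\rm(b)} of that lemma --- so Lemma~4.1, whose proof is an induction involving only the current and previous iterates, already yields the monotone increase $v_{\ell+1}\ge v_\ell$ for every $\ell\ge\ell_0$, \emph{provided} the scheme \R{c3} genuinely produces $v_{\ell+1}$ at each such step, i.e., provided $g$ is evaluated only at vectors whose components lie in $[0,1)$. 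Hence the content of the theorem beyond Lemma~4.1 is the qualifier ``until quenching occurs'': one must show the scheme stays well defined and the increments stay strictly positive exactly up to the first iterate at which a component attains or passes unity.

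I would then carry out three steps. (1) \emph{Well-definedness of a single step.} Under \R{cfl}, Lemma~3.2 gives that $I-\tfrac{\tau_\ell}{2}M_\sigma$ is nonsingular and inverse-positive and $I+\tfrac{\tau_\ell}{2}M_\sigma$ is nonnegative; since $f\in C^1[0,1)$ and the current iterate has components bounded away from $1$, for $\tau_\ell$ small the right-hand side of \R{c3} is a contraction near $v_\ell$, so $v_{\ell+1}$ exists, is unique, and satisfies $\|v_{\ell+1}-v_\ell\|=\OO(\tau_\ell)$ (in practice one works with the explicit surrogate \R{approx}). (2) \emph{Base step at $\ell_0$.} From {\rm(iii)} and the first-order expansion used in the proof of Lemma~4.1, $v_{\ell_0+1}-v_{\ell_0}=\tau_{\ell_0}\l(I-\tfrac{\tau_{\ell_0}}{2}g_v(\xi_{\ell_0})\r)^{-1}\l(Cv_{\ell_0}+\tfrac12 g(v_{\ell_0})\r)+\OO\l(\tau_{\ell_0}^2\r)$, so Lemma~3.3 and small $\tau_{\ell_0}$ give $v_{\ell_0+1}>v_{\ell_0}$ (automatic via Corollary~4.1 when $v_{\ell_0}\equiv 0$); hypothesis {\rm(ii)} then feeds the no-immediate-quenching lemma --- and Remark~4.1 in the nonzero case --- to guarantee every component of $v_{\ell_0+1}$ is still strictly below unity. (3) \emph{Inductive step.} Assuming $v_{\ell_0},\dots,v_\ell$ all have components in $[0,1)$ and $v_m>v_{m-1}$ for $\ell_0<m\le\ell$, I would subtract \R{c3} at two consecutive steps exactly as in the proof of Lemma~4.1, so that $v_{\ell+1}-v_\ell$ is $\prod_{\sigma=1}^3\l(I-\tfrac{\tau_\ell}{2}M_\sigma\r)^{-1}\l(I+\tfrac{\tau_\ell}{2}M_\sigma\r)$ applied to the positive vector $v_\ell-v_{\ell-1}+\tfrac{\tau_\ell}{2}(g(v_\ell)-g(v_{\ell-1}))$, plus the perturbation $\tfrac{\tau_\ell}{2}(g(v_{\ell+1})-g(v_\ell))$; by Lemmas~3.2--3.3 the first term is strictly positive, and for $\tau_\ell$ small relative to that strictly positive increment the perturbation --- which is $\OO(\tau_\ell^2)$ once Step~(1) is in hand --- cannot reverse it, so $v_{\ell+1}>v_\ell$. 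One runs the iteration until the first iterate carrying a component $\ge 1$; that is numerical quenching, consistent with the necessary condition \R{a2}, and up to it the sequence $\l\{v_\ell\r\}_{\ell\ge\ell_0}$ increases monotonically, which is the assertion.

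I expect the main obstacle to be the implicit perturbation $\tfrac{\tau_\ell}{2}(g(v_{\ell+1})-g(v_\ell))$ in Step~(3): $v_{\ell+1}$ is precisely the unknown, so the sign of $g(v_{\ell+1})-g(v_\ell)$ cannot be read off in advance, and the two instances of \R{c3} being subtracted carry different step sizes $\tau_\ell\ne\tau_{\ell-1}$, so the subtraction is clean only modulo $\OO(\tau_\ell^2)$ --- exactly the point the proof of Lemma~4.1 glides over. The way I would secure it is to treat \R{c3} as an order-preserving, $\OO(\tau_\ell)$-contractive fixed-point map on the compact set of pre-quenching states (using that $g$ is $C^1$ and monotone there): this simultaneously settles Step~(1) and, through the Picard iterates started at $v_\ell$ together with the inductive hypothesis $v_\ell\ge v_{\ell-1}$, shows $v_{\ell+1}\ge v_\ell$, after which Lemma~3.3 applied to the leading increment upgrades ``$\ge$'' to ``$>$'' as in the base case. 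A lesser, bookkeeping obstacle is the ``until quenching'' clause: the induction hypothesis must carry \emph{both} $v_m>v_{m-1}$ \emph{and} ``all components of $v_m$ below unity'' for $\ell_0\le m\le\ell$, and the no-immediate-quenching lemma (respectively Remark~4.1) is invoked only once, to launch the induction, since the theorem neither claims nor requires that the iterates stay below unity for all time.
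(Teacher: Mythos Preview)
Your proposal is correct and follows essentially the same approach as the paper: the paper provides no formal proof of this theorem at all, stating only ``Combining above results we obtain the following theorem,'' so the intended argument is precisely what you outline --- re-index Lemma~4.1 to start at $\ell_0$ using hypotheses {\rm(i)} and {\rm(iii)}, and invoke Lemma~4.4 (with Remark~4.1 for nonzero starts) via hypothesis {\rm(ii)} to launch the induction below unity. You go considerably further than the paper in attending to the implicit perturbation $\tfrac{\tau_\ell}{2}(g(v_{\ell+1})-g(v_\ell))$ and the $\tau_\ell\ne\tau_{\ell-1}$ bookkeeping, both of which the paper's Lemma~4.1 proof glides over; your fixed-point/contraction treatment is a legitimate way to close those gaps, though the paper itself is content to leave them at the level of ``for $\tau_\ell$ sufficiently small.''
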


\vspace{2mm}

\section{Stability} \clearallnum
Nonlinear stability has been an extremely difficult issue when nonlinear Kawarada equations are concerned
\cite{Acker2,Beau1,Cao2,Sheng21,Sheng15b,Sheng3}. However, when the numerical solution varies relatively slowly, 
that is, before reaching a certain neighborhood of quenching, instability may be detected through a linear stability 
analysis of the nonlinear scheme utilized \cite{Cheng,Lang2,Twi}. Although the application of such an analysis 
to nonlinear problems cannot be rigorously justified, it has been found to be remarkably informative in practical 
computations. In the following study, we will first carry out a linearized stability analysis in the von Neumann sense
for \R{c3} with its 
nonlinear source term frozen. This is equivalent to assuming that the source term is effectively accurate. 
The analysis will then be extended to circumstances where the nonlinear term is not frozen. In the later case, 
the boundedness of the Jacobian of the source term, $\|g_v(v)\|_2,$ which is equivalent to assuming that we are some neighborhood away from quenching, is assumed.

In the following, let $A\in\mathbb{C}^{n\times n}$ and again denote $E(\cdot) = \exp(\cdot)$ for $n>1.$ 

\begin{define}
Let $\|\cdot\|$ be an induced matrix norm. Then the 
associated logarithmic norm $\mu : \mathbb{C}^{n\times n}\to \RR$ of $A$ is defined as
$$\mu(A) = \lim_{h\to 0^+} \frac{\|I_n + hA\| - 1}{h},$$
where $I_n\in\mathbb{C}^{n\times n}$ is the identity matrix.
\end{define}

\begin{rem}
If the matrix norm being considered is the spectral norm, 
then $\mu(A) = \max\left\{\lambda : \lambda\ \text{is an eigenvalue of}\ (A+A^*)/2\right\} = \frac{1}{2}\lambda_{\max}(A+A^*).$
\end{rem}

\begin{lemma}
For $\alpha\in\mathbb{C}$ we have 
$$\|E(\alpha A)\| \le E(\alpha \mu(A)).$$
\end{lemma}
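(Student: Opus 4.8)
The plan is to obtain the bound from the Lie product formula together with the defining property of the logarithmic norm. Throughout I would work with a real parameter $\alpha\ge 0$, which is the case needed in the stability analysis that follows and, for a general $A$, the only case in which the right-hand side $E(\alpha\mu(A))$ is a genuine positive real number; the case $\alpha=0$ is trivial since $E(0)=I_n$ and $\|I_n\|=1$, so I would assume $\alpha>0$.

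First I would recall the product representation $E(\alpha A)=\lim_{n\to\infty}\l(I_n+\tfrac{\alpha}{n}A\r)^n$, with convergence in every matrix norm (all norms on $\CC^{n\times n}$ being equivalent). Since $\|\cdot\|$ is continuous, $\|E(\alpha A)\|=\lim_{n\to\infty}\bigl\|\l(I_n+\tfrac{\alpha}{n}A\r)^n\bigr\|$, and submultiplicativity of the induced norm gives $\bigl\|\l(I_n+\tfrac{\alpha}{n}A\r)^n\bigr\|\le\bigl\|I_n+\tfrac{\alpha}{n}A\bigr\|^n$ for each $n$. It then remains to evaluate the limit of the right-hand side. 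Writing $h_n=\alpha/n\to 0^{+}$, the definition of $\mu(A)$ yields $\bigl\|I_n+h_nA\bigr\|=1+h_n\mu(A)+h_n\varepsilon_n$ with $\varepsilon_n\to 0$; substituting $h_n=\alpha/n$ and taking logarithms,
$$n\log\bigl\|I_n+\tfrac{\alpha}{n}A\bigr\| = n\log\l(1+\frac{\alpha\mu(A)+\alpha\varepsilon_n}{n}\r)\longrightarrow \alpha\mu(A),$$
so $\bigl\|I_n+\tfrac{\alpha}{n}A\bigr\|^n\to E(\alpha\mu(A))$. Combining this with the preceding inequality gives $\|E(\alpha A)\|\le E(\alpha\mu(A))$, as claimed.

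As an alternative route I would set $\varphi(t)=\|E(tA)\|$ for $t\ge 0$, use $E((t+h)A)=E(hA)E(tA)$ and $E(hA)=I_n+hA+\OO(h^2)$ together with submultiplicativity to derive the Dini estimate $D^{+}\varphi(t)\le \mu(A)\varphi(t)$ directly from the definition of $\mu$, and then integrate this Gronwall-type differential inequality from the initial value $\varphi(0)=\|I_n\|=1$ to obtain $\varphi(\alpha)\le E(\alpha\mu(A))$.

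\textbf{Main obstacle.} In the product-formula approach the one step requiring care is controlling the error term $\varepsilon_n$ well enough to pass it through $n\log(1+\cdot/n)$; this is an elementary estimate but must be done explicitly. In the differential-inequality approach the subtlety is that $\varphi$ need not be differentiable (an induced norm can have corners), so one genuinely needs the Dini-derivative form of Gronwall's lemma, justified via the continuity of $\varphi$ (itself a consequence of the reverse triangle inequality and continuity of $t\mapsto E(tA)$). Neither point is deep, but each has to be stated precisely rather than waved through.
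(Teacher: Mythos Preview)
The paper does not actually prove this lemma: its entire proof is the single line ``See \cite{Golub}'', i.e.\ a citation to Golub and Van Loan. Your proposal therefore supplies substantially more than the paper does, and both of your routes (the product-formula argument and the Dini--Gronwall argument) are standard, correct derivations of this classical inequality. Your observation that the stated hypothesis $\alpha\in\mathbb{C}$ is too generous --- the right-hand side $E(\alpha\mu(A))$ is only a meaningful real upper bound for $\alpha\ge 0$ --- is accurate, and indeed the paper only ever invokes the lemma with $\alpha=\tau_\ell>0$ in Lemma~5.3 and Theorem~5.2. Since the paper defers entirely to a reference, there is no approach to compare against; either of your arguments would serve as a self-contained replacement.
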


\begin{proof}
See \cite{Golub}.
\end{proof}

For the semi-adaptive LOD method \R{c3} with its nonlinear source term frozen, regularity 
conditions need to be imposed upon the nonuniform spatial grids for a linear
stability analysis. For this purpose, let us denote $h_{\sigma} = \min_{j=1,\dots,N_\sigma}\{h_{\sigma,j}\},~
\sigma=1,2,3.$

\begin{lemma}
If
\bbb
\frac{1}{h_{1}^2\phi_{i-1,j,k}} - \frac{1}{h_{1,i-1}h_{1,i}\phi_{i,j,k}} &\leq &\frac{K}{2},\label{sss1}\\
\frac{1}{h_{2}^2\phi_{i,j-1,k}} - \frac{1}{h_{2,j-1}h_{2,j}\phi_{i,j,k}} &\leq &\frac{K}{2},\label{sss2}\\
\frac{1}{h_{3}^2\phi_{i,j,k-1}} - \frac{1}{h_{3,k-1}h_{3,k}\phi_{i,j,k}} &\leq &\frac{K}{2},\label{sss3}
\eee
where the constant $K>0$ is independent of $h_{\sigma,j},~j=1,\dots,N_{\sigma},~\sigma=1,2,3.$ 
then $$\mu(M_\sigma)\le K,~\sigma=1,2,3.$$
\end{lemma}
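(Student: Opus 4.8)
The plan is to use Remark 5.1, which identifies the logarithmic norm under the spectral norm with $\frac12\lambda_{\max}(M_\sigma + M_\sigma^{\tT})$. So the task reduces to bounding the largest eigenvalue of the symmetric matrix $M_\sigma + M_\sigma^{\tT}$ by $2K$. I would treat $\sigma=1$ in detail and note the other two cases are identical by symmetry of the construction. Recall $M_1 = \frac{1}{a^2}B(I_{N_3}\otimes I_{N_2}\otimes T_1)$, where $B$ is the positive diagonal matrix of the $\phi_{i,j,k}^{-1}$ and $T_1$ is the (nonsymmetric) tridiagonal second-difference matrix with entries $l_{1,j}, m_{1,j}, n_{1,j}$. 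Since $B$ is diagonal, the product $B(I\otimes I\otimes T_1)$ is block-structured: it is a block-diagonal collection of tridiagonal blocks, one for each fixed $(j,k)$, whose $(i-1,i)$, $(i,i)$, $(i,i+1)$ entries are $\phi_{i,j,k}^{-1}l_{1,i-1}$-type expressions scaled by $1/a^2$. Writing $S_1 = M_1 + M_1^{\tT}$, this is again block-diagonal with symmetric tridiagonal blocks, and $\lambda_{\max}(S_1)$ is the maximum over all blocks of the largest eigenvalue of that block.

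Next I would apply Ger\u{s}chgorin's circle theorem to each symmetric tridiagonal block of $S_1$, exactly as in the proof of Lemma 3.1. The diagonal entry of $S_1$ in the slot $(i,j,k)$ is $\frac{2}{a^2}m_{1,i}\phi_{i,j,k}^{-1} = -\frac{4}{a^2 h_{1,i-1}h_{1,i}\phi_{i,j,k}}$, which is negative; the off-diagonal entry joining slot $(i-1,j,k)$ to slot $(i,j,k)$ in $S_1$ is $\frac{1}{a^2}\left(n_{1,i-1}\phi_{i-1,j,k}^{-1} + l_{1,i-1}\phi_{i,j,k}^{-1}\right)$, and similarly for the $(i,i+1)$ entry. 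A Ger\u{s}chgorin disc centered at the diagonal entry with radius equal to the sum of the absolute values of the two off-diagonal entries gives, after collecting terms and using $h_{1,i}\ge h_1$ for all $i$, a bound of the form $\lambda_{1,i} \le -\frac{4}{a^2 h_{1,i-1}h_{1,i}\phi_{i,j,k}} + \frac{2}{a^2}\left(\frac{1}{h_1^2\phi_{i-1,j,k}} + \text{(terms)}\right)$. The point of hypotheses \R{sss1}--\R{sss3} is precisely that each such combination is dominated by $2K/a^2$ times the $1/a^2$ already present; more carefully, one rearranges so that the left-hand sides of \R{sss1}--\R{sss3} appear, each $\le K/2$, and the four contributions (two from the upper off-diagonal, two from the lower, matched against the two halves of the negative diagonal) sum to at most $2K$. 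I would present the single key inequality chain rather than all bookkeeping.

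Then, since each Ger\u{s}chgorin disc for a block of $S_1$ is contained in $(-\infty, 2K]$, every eigenvalue of every block — hence every eigenvalue of $S_1$ — is $\le 2K$, so $\mu(M_1) = \frac12\lambda_{\max}(S_1) \le K$. Repeating verbatim with the roles of the coordinate directions permuted (and using that the Kronecker factors $I_{N_\sigma}$ contribute nothing, while $B$ is common to all three $M_\sigma$) yields $\mu(M_2)\le K$ and $\mu(M_3)\le K$, completing the proof.

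The main obstacle I anticipate is the algebraic bookkeeping in the Ger\u{s}chgorin step: the off-diagonal entries of $S_1$ mix two different $\phi$ values ($\phi_{i-1,j,k}^{-1}$ and $\phi_{i,j,k}^{-1}$) because $B$ is not a scalar multiple of the identity, so one must split each off-diagonal term and pair the $\phi_{i-1,j,k}^{-1}$ part with one half of the diagonal and the $\phi_{i,j,k}^{-1}$ part with the other, in order to make the hypothesized quantities $\frac{1}{h_1^2\phi_{i-1,j,k}} - \frac{1}{h_{1,i-1}h_{1,i}\phi_{i,j,k}}$ emerge cleanly. Getting the matching right — so that the "$-1/(h_{1,i-1}h_{1,i}\phi_{i,j,k})$" really does absorb the correct share of the diagonal — is the delicate point; once it is set up, the rest is a direct estimate using $h_{1,i}\ge h_1$ and the fact that the off-diagonal coefficients $n_{1,j}, l_{1,j}$ are each bounded by $2/h_1^2$.
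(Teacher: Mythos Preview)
Your proposal is correct and follows essentially the same route as the paper: identify $\mu(M_\sigma)=\tfrac12\lambda_{\max}(M_\sigma+M_\sigma^{\tT})$, observe that $M_\sigma+M_\sigma^{\tT}$ is block-diagonal with symmetric tridiagonal blocks indexed by the remaining two coordinates, and apply Ger\u{s}chgorin row-wise. The only simplification the paper makes relative to your outline is that the ``delicate pairing'' you anticipate is unnecessary: since $\phi_{i,j,k}$ is monotone increasing in each index, every $\phi^{-1}$ appearing in the off-diagonal row sum is dominated by $\phi_{i-1,j,k}^{-1}$, so the entire off-diagonal sum is bounded crudely by $2/(h_1^2\phi_{i-1,j,k})$, after which hypothesis \R{sss1} applies directly.
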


\begin{proof}
We only need to consider the case involving $M_1$ since the
other cases are similar. Note that $\mu(M_1)= \frac{1}{2}\lambda_{\max}\l(M_1+M_1^{\tT}\r)$ and
$$\frac{1}{2}\l(M_1+M_1^{\tT}\r) = \mbox{diag}(X_{1,1},\dots,X_{N_2,1},X_{1,2},\dots,X_{N_2,N_3})\in\RR^{N_1N_2N_3\times N_1N_2N_3},$$
where
$$\l(X_{j,k}\r)_{n,p} = \l\{\begin{array}{cl} \frac{m_{1,n}}{\phi_{n,j,k}}, &\mbox{if}\ n=p,\\ 
\frac{n_{1,n-1}}{2\phi_{n-1,j,k}}+\frac{l_{1,n-1}}{2\phi_{n,j,k}}, &\mbox{if}\ n-p=1,\\ 
\frac{n_{1,n}}{2\phi_{n,j,k}}+\frac{l_{1,n}}{2\phi_{n+1,j,k}}, &\mbox{if}\ p-n=1,\\ 0, &\mbox{otherwise.} \end{array}\r. $$
We apply Ger\u{s}chgorin's circle theorem to an arbitrary $X_{j,k}$ and note that a similar argument works for each $X_{j,k},~j=1,\dots,N_2,~k=1,\dots,N_3.$ Further, notice that we only need to consider circumstances 
where the bandwidth of $M_1+M_1^{\tT}$ is three. Thus,
\bbbb
\l|\lambda_{1,i} - \frac{m_{1,i}}{\phi_{i,j,k}}\r| & \le & \l|\frac{n_{1,i-1}}{2\phi_{i-1,j,k}}+\frac{l_{1,i-1}}{2\phi_{i,j,k}}\r|+
\l|\frac{n_{1,i}}{2\phi_{i,j,k}}+\frac{l_{1,i}}{2\phi_{i+1,j,k}}\r|
~\le~  \frac{2}{h_1^2\phi_{i-1,j,k}},\\&&i=2,\dots,N_1-1,~j=1,\dots,N_2,~k=1,\dots,N_3.
\eeee
We then see that \R{sss1} follows immediately from the above and the fact that
$$\frac{2}{h_1^2\phi_{i-1,j,k}} - \frac{2}{h_{1,i-1}h_{1,i}\phi_{i,j,k}} \leq K,~~~i=2,\dots,N_1-1,~j=1,\dots,N_2,~k=1,\dots,N_3.$$
\end{proof}

\begin{lemma}
If \R{sss1}-\R{sss3} hold then
\bb{bd1}
\l\|\l(I-\frac{\tau_\ell}{2}M_{\sigma}\r)^{-1}\l(I+\frac{\tau_\ell}{2}M_{\sigma}\r)\r\|_2 \le 1 + 
\tau_\ell K + \OO\l(\tau_\ell^2\r),~~~\ell\geq 0,~\sigma=1,2,3,
\ee
for sufficiently small $\tau_\ell>0.$
\end{lemma}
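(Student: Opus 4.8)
The plan is to bound the spectral norm of the product $\l(I-\frac{\tau_\ell}{2}M_\sigma\r)^{-1}\l(I+\frac{\tau_\ell}{2}M_\sigma\r)$ by relating it to the matrix exponential of $\tau_\ell M_\sigma$ and then invoking the logarithmic norm estimate of Lemma 5.2 together with the bound $\mu(M_\sigma)\le K$ from Lemma 5.3. First I would note that, by the same geometric series expansion already used in the proof of Lemma 4.1 (which is available since \R{cfl} gives $\l\|\frac{\tau_\ell}{2}M_\sigma\r\|_2<1$), one has
$$\l(I-\frac{\tau_\ell}{2}M_\sigma\r)^{-1}\l(I+\frac{\tau_\ell}{2}M_\sigma\r) = I + \tau_\ell M_\sigma + \OO\l(\tau_\ell^2\r),$$
which is precisely the $[1/1]$ Padé approximation to $E(\tau_\ell M_\sigma)$; hence the product differs from $E(\tau_\ell M_\sigma)$ by a term of order $\OO\l(\tau_\ell^2\r)$ in the spectral norm.

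Next I would apply the triangle inequality to split
$$\l\|\l(I-\frac{\tau_\ell}{2}M_\sigma\r)^{-1}\l(I+\frac{\tau_\ell}{2}M_\sigma\r)\r\|_2 \le \l\|E(\tau_\ell M_\sigma)\r\|_2 + \l\|\l(I-\frac{\tau_\ell}{2}M_\sigma\r)^{-1}\l(I+\frac{\tau_\ell}{2}M_\sigma\r) - E(\tau_\ell M_\sigma)\r\|_2,$$
where the second term is $\OO\l(\tau_\ell^2\r)$ by the previous step. For the first term, Lemma 5.2 with $A=M_\sigma$ and $\alpha=\tau_\ell>0$ yields $\l\|E(\tau_\ell M_\sigma)\r\|_2 \le E\l(\tau_\ell\mu(M_\sigma)\r)$, and since \R{sss1}--\R{sss3} hold, Lemma 5.3 gives $\mu(M_\sigma)\le K$, so that $\l\|E(\tau_\ell M_\sigma)\r\|_2 \le E(\tau_\ell K) = 1 + \tau_\ell K + \OO\l(\tau_\ell^2\r)$ by Taylor expansion of the scalar exponential. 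Combining the two estimates produces the claimed bound \R{bd1}, with all the $\OO\l(\tau_\ell^2\r)$ terms absorbed into one.

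The main obstacle I anticipate is making the $\OO\l(\tau_\ell^2\r)$ control of the Padé-versus-exponential discrepancy uniform in the discretization parameters, since $M_\sigma$ has norm growing like $h_\sigma^{-2}$ as the mesh is refined; a priori the implied constant in $\OO\l(\tau_\ell^2\r)$ could depend on $\|M_\sigma\|_2$. One clean way around this is to observe that the expansion error is $\sum_{k\ge 2}\l(\frac{\tau_\ell}{2}M_\sigma\r)^k$ (times $I+\frac{\tau_\ell}{2}M_\sigma$) minus the tail of $E(\tau_\ell M_\sigma)$, and to bound it by a convergent geometric series in $\l\|\frac{\tau_\ell}{2}M_\sigma\r\|_2$; under \R{cfl} this quantity is strictly less than $1$, so the remainder is controlled by $C\l\|\frac{\tau_\ell}{2}M_\sigma\r\|_2^2 = \OO\l(\tau_\ell^2\r)$ with the understanding — consistent with the paper's convention throughout Sections 4 and 5 — that $\tau_\ell\to 0$ with the spatial grid held fixed. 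Since the lemma statement explicitly restricts to "sufficiently small $\tau_\ell>0$", this reading is legitimate, and the argument goes through without further regularity hypotheses beyond \R{sss1}--\R{sss3}.
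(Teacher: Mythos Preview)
Your approach is essentially the same as the paper's: relate the rational factor to $E(\tau_\ell M_\sigma)$ via the $[1/1]$ Pad\'e approximation, bound $\|E(\tau_\ell M_\sigma)\|_2$ by $E(\tau_\ell\mu(M_\sigma))$ using Lemma~5.1, apply the bound $\mu(M_\sigma)\le K$ from Lemma~5.2, and Taylor-expand the scalar exponential. The only substantive differences are that the paper records the sharper Pad\'e error $\OO(\tau_\ell^3)$ (immaterial here since everything is absorbed into $\OO(\tau_\ell^2)$), and that your lemma references are shifted by one: what you call Lemma~5.2 is the paper's Lemma~5.1, and what you call Lemma~5.3 is the paper's Lemma~5.2 (the statement you are proving \emph{is} Lemma~5.3, so citing it for $\mu(M_\sigma)\le K$ would be circular).
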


\begin{proof}
Recalling the [1/1] Pad{\'e} approximation utilized
in Section 2, we have
$$\l(I-\frac{\tau_\ell}{2}M_\sigma\r)^{-1}\l(I+\frac{\tau_\ell}{2}M_\sigma\r) = E(\tau_\ell M_\sigma) + \OO\l(\tau_\ell^3\r),~~~\sigma=1,2,3.$$
Now, based on Lemmas 5.1 and 5.2,
\bbbb
\l\|\l(I-\frac{\tau_\ell}{2}M_\sigma\r)^{-1}\l(I+\frac{\tau_\ell}{2}M_\sigma\r)\r\|_2 
& \le & E(\tau_\ell \mu(M_\sigma)) + \OO\l(\tau_\ell^3\r)\\
& \le &   
\l[1 + \tau_\ell K + \OO\l(\tau_\ell^2\r)\r] + \OO\l(\tau_\ell^3\r)\\
& = & 1 + \tau_\ell K + \OO\l(\tau_\ell^2\r),
\eeee
which is the desired bound.
\end{proof}

Combining the above results gives the following theorem.

\begin{thm}
If \R{cfl} and \R{sss1}-\R{sss3} hold, then the semi-adaptive LOD 
method \R{c3} with the source term frozen is unconditionally stable in the von Neumann sense 
under the spectral norm, that is,
$$\|z_{\ell+1}\|_2 \leq c \|z_{0}\|_2,~~~\ell\geq 0, $$
where $z_0=v_0-\tilde{v}_0$ is an initial error, $z_{\ell+1}=v_{\ell+1}-\tilde{v}_{\ell+1}$ is the 
$(\ell+1)$th perturbed error vector, and $c>0$ is a constant independent of $\ell$ and $\tau_\ell.$
\end{thm}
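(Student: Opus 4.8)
The plan is to exploit that, once the nonlinear source is frozen, the scheme \R{c3} becomes an affine one-step recursion, so the perturbation error propagates through a \emph{homogeneous} iteration whose transition operators are exactly the $[1/1]$ Pad\'e amplification matrices already controlled by \R{bd1}; a discrete Gr\"onwall argument then closes the estimate.

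First I would set $P_\ell=\prod_{\sigma=1}^3\l(I-\frac{\tau_\ell}{2}M_\sigma\r)^{-1}\l(I+\frac{\tau_\ell}{2}M_\sigma\r)$ and observe that, with $g$ held fixed, both $v_{\ell+1}$ and the perturbed $\tilde v_{\ell+1}$ are produced from \R{c3} with one and the same inhomogeneous contribution $\frac{\tau_\ell}{2}(P_\ell+I)\bar g.$ Subtracting the two instances of \R{c3} annihilates that contribution and leaves $z_{\ell+1}=P_\ell z_\ell,$ hence by iteration $z_{\ell+1}=P_\ell P_{\ell-1}\cdots P_0\,z_0,$ and submultiplicativity of the spectral norm gives $\|z_{\ell+1}\|_2\le\l(\prod_{k=0}^{\ell}\|P_k\|_2\r)\|z_0\|_2.$

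Next I would bound a single transition. Submultiplicativity once more, together with Lemma 5.3, gives for the uniformly small adaptive steps $\tau_k>0$ that
$$\|P_k\|_2\le\prod_{\sigma=1}^3\l\|\l(I-\frac{\tau_k}{2}M_\sigma\r)^{-1}\l(I+\frac{\tau_k}{2}M_\sigma\r)\r\|_2\le\l(1+\tau_k K+\OO(\tau_k^2)\r)^3=1+3K\tau_k+\OO(\tau_k^2).$$
Writing $a_k=3K\tau_k+\OO(\tau_k^2)\ge 0$ and using $1+a_k\le e^{a_k},$ the product telescopes into $\|z_{\ell+1}\|_2\le\exp\l(\sum_{k=0}^{\ell}a_k\r)\|z_0\|_2.$ Since the scheme is run before quenching, $\sum_{k=0}^{\ell}\tau_k=t_{\ell+1}-t_0\le T-t_0,$ while $\sum_{k=0}^{\ell}\OO(\tau_k^2)\le(\max_k\tau_k)(T-t_0)$ is bounded, so $\sum_{k=0}^{\ell}a_k\le(3K+1)(T-t_0).$ Hence $\|z_{\ell+1}\|_2\le c\,\|z_0\|_2$ with $c=\exp\l((3K+1)(T-t_0)\r),$ which depends only on $K$ and $T-t_0$ and is therefore independent of $\ell$ and of the step sizes $\tau_\ell.$ Here \R{cfl} guarantees that the operators $P_\ell$ are well defined (Lemma 3.2), while \R{sss1}--\R{sss3} supply, via Lemmas 5.1--5.3, the uniform amplification bound \R{bd1}.

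The step I expect to be the main obstacle is making the telescoping genuinely uniform: the number of time steps is not bounded a priori (the adaptive rule forces $\tau_\ell\to 0$ near quenching, so arbitrarily many steps may precede $t=T$), and one must verify that the accumulated $\OO(\tau_k^2)$ remainders from \R{bd1} cannot degrade the bound. This is resolved precisely by the estimate $\sum_k\tau_k^2\le(\max_k\tau_k)\sum_k\tau_k\le(\max_k\tau_k)(T-t_0)<\infty;$ granting that, the remainder of the argument is a routine combination of submultiplicativity and the inequality $1+x\le e^x.$
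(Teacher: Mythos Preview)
Your proposal is correct and follows essentially the same route as the paper: derive the homogeneous error recursion $z_{\ell+1}=P_\ell z_\ell$ by freezing the source, bound each factor via Lemma~5.3 and submultiplicativity as $\|P_k\|_2\le 1+3K\tau_k+\OO(\tau_k^2)$, and then control the product using $\sum_k\tau_k\le T-t_0$. The only cosmetic difference is that you close the telescoping product with the inequality $1+x\le e^x$, whereas the paper bounds the product directly by $1+3KT+c_4\sum_k\tau_k^2$; your version is arguably cleaner and makes the uniformity in $\ell$ more transparent.
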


\begin{proof}
When the nonlinear source term is frozen, $z_{\ell+1}$ takes the form of
\bb{st1}
z_{\ell+1} = \prod_{\sigma=1}^3\l(I-\frac{\tau_\ell}{2}M_\sigma\r)^{-1}\l(I+\frac{\tau_\ell}{2}M_\sigma\r) z_\ell,~~~\ell\geq 0.
\ee
Recall that $\sum_{k=0}^{\ell}\tau_k\leq T,~\ell>0.$ It follows by taking the norm on both sides of \R{st1} that
\bbbb
\|z_{\ell+1}\|_2 & \le & \prod_{\sigma=1}^3\l\|\l(I-\frac{\tau_\ell}{2}M_\sigma\r)^{-1}\l(I+\frac{\tau_\ell}{2}
M_\sigma\r)\r\|_2\|z_\ell\|_2\\ 
& \leq & \l(1 + 3\tau_\ell K + c_2\tau_\ell^2\r)\|z_\ell\|_2 ~ \leq ~ \prod_{k=0}^\ell \l(1 + 3\tau_k K + c_3\tau_k^2\r)\|z_0\|_2\\
& \leq & \l(1 + 3KT + c_4\sum_{k=0}^\ell \tau_k^2\r)\|z_0\|_2 ~ \leq ~ c\|z_0\|_2,
\eeee
where $c_1,c_2,c_3,c_4$ and $c$ are positive constants independent of $\ell,~\tau_{k},~0\leq k\leq\ell.$ 
Therefore the theorem is clear.
\end{proof}

We now consider the case without freezing the nonlinear source term in \R{c3}. In this situation, restrictions upon the 
Jacobian matrix $g_v(v)$ become necessary.

\begin{thm}
Let $\tau_k,~0\le k\le \ell,$ be sufficiently small and \R{cfl}, \R{sss1}-\R{sss3} hold. 
If there exists a constant $G<\infty$ such that
\bb{cond1}
\|g_v(\xi)\|_2 \le G,~~\xi\in\RR^{N_1N_2N_3},
\ee
then the semi-adaptive LOD method \R{c3} is unconditionally stable in the von Neumann sense, that is,
$$ \|z_{\ell+1}\|_2 \leq \tilde{c}\, \|z_{0}\|_2,~~~\ell>0, $$
where $z_0=v_0-\tilde{v}_0$ is an initial error, $z_{\ell+1}=v_{\ell+1}-\tilde{v}_{\ell+1}$ is the 
$(\ell+1)$th perturbed error vector, and $\tilde{c}>0$ is a constant independent of $\ell$ and $\tau_\ell.$
\end{thm}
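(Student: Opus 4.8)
The plan is to mimic the structure of the proof of Theorem 5.1, but to carry along the nonlinear source term using the Lipschitz-type bound \R{cond1} together with a discrete Gr\"onwall argument. First I would write down the recursion satisfied by the perturbed error $z_{\ell+1} = v_{\ell+1} - \tilde{v}_{\ell+1}.$ Subtracting the two instances of \R{c3} and using the mean value form $g(v_\ell) - g(\tilde{v}_\ell) = g_v(\eta_\ell) z_\ell$ for some $\eta_\ell$ on the segment $\LL(v_\ell;\tilde{v}_\ell),$ and similarly $g(v_{\ell+1}) - g(\tilde{v}_{\ell+1}) = g_v(\zeta_\ell) z_{\ell+1},$ gives
\bbbb
\l(I - \frac{\tau_\ell}{2} g_v(\zeta_\ell)\r) z_{\ell+1} = \l[\prod_{\sigma=1}^3\l(I-\frac{\tau_\ell}{2}M_\sigma\r)^{-1}\l(I+\frac{\tau_\ell}{2}M_\sigma\r)\r]\l(I + \frac{\tau_\ell}{2} g_v(\eta_\ell)\r) z_\ell.
\eeee
Since $\tau_\ell$ is sufficiently small and $\|g_v(\zeta_\ell)\|_2 \le G$ by \R{cond1}, the matrix $I - \frac{\tau_\ell}{2} g_v(\zeta_\ell)$ is invertible with $\l\|\l(I - \frac{\tau_\ell}{2} g_v(\zeta_\ell)\r)^{-1}\r\|_2 \le 1 + \frac{\tau_\ell}{2} G + \OO(\tau_\ell^2) \le (1 - \frac{\tau_\ell}{2} G)^{-1}$ for $\tau_\ell G < 2.$ Likewise $\l\|I + \frac{\tau_\ell}{2} g_v(\eta_\ell)\r\|_2 \le 1 + \frac{\tau_\ell}{2} G.$

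Next I would take the spectral norm on both sides and invoke the three earlier estimates: submultiplicativity over the Kronecker/product structure, Lemma 5.3 to bound each splitting factor $\l\|\l(I-\frac{\tau_\ell}{2}M_\sigma\r)^{-1}\l(I+\frac{\tau_\ell}{2}M_\sigma\r)\r\|_2 \le 1 + \tau_\ell K + \OO(\tau_\ell^2),$ and the two bounds just noted for the Jacobian factors. Multiplying the three factors $1 + \tau_\ell K$ and the two factors $1 + \frac{\tau_\ell}{2} G$ together yields
\bbbb
\|z_{\ell+1}\|_2 \le \l(1 + \tau_\ell (3K + G) + \hat{c}\,\tau_\ell^2\r) \|z_\ell\|_2
\eeee
for a constant $\hat{c} > 0$ independent of $\ell$ and the step sizes. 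Iterating this inequality from $0$ to $\ell$ and using $\sum_{k=0}^\ell \tau_k \le T$ together with $\sum_{k=0}^\ell \tau_k^2 \le T \max_k \tau_k < \infty,$ exactly as in the proof of Theorem 5.1, gives
\bbbb
\|z_{\ell+1}\|_2 \le \prod_{k=0}^\ell \l(1 + \tau_k(3K+G) + \hat{c}\,\tau_k^2\r)\|z_0\|_2 \le \exp\l((3K+G)T + \hat{c}\sum_{k=0}^\ell\tau_k^2\r)\|z_0\|_2 \le \tilde{c}\,\|z_0\|_2,
\eeee
with $\tilde{c} > 0$ independent of $\ell$ and $\tau_\ell.$ This is the claimed bound.

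The main obstacle, and the only place genuine care is required, is the handling of the implicit Jacobian factor $\l(I - \frac{\tau_\ell}{2} g_v(\zeta_\ell)\r)^{-1}$: one must verify that \R{cond1} plus smallness of $\tau_\ell$ genuinely makes this matrix nonsingular and supplies a norm bound of the form $1 + \OO(\tau_\ell),$ rather than something that degrades as $\ell$ grows. Because $G$ is a uniform bound over all $\xi,$ the estimate is uniform in $\ell,$ and the geometric accumulation closes just as in the frozen-source case; the remaining steps are the routine norm manipulations already rehearsed in Theorem 5.1, so I would present them compactly and refer back to that proof for the discrete Gr\"onwall step.
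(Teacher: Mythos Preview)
Your proposal is correct and follows essentially the same route as the paper: derive the error recursion $\l(I-\frac{\tau_\ell}{2}g_v(\zeta_\ell)\r)z_{\ell+1}=\Phi_\ell\l(I+\frac{\tau_\ell}{2}g_v(\eta_\ell)\r)z_\ell$ via the mean value theorem, bound each factor in spectral norm using Lemma~5.3 and the uniform Jacobian bound \R{cond1}, then iterate and close with a discrete Gr\"onwall argument over $\sum_k\tau_k\le T$. The only cosmetic difference is that the paper rewrites both $\l(I-\frac{\tau_k}{2}g_v\r)^{-1}$ and $I+\frac{\tau_k}{2}g_v$ as $E\l(\frac{\tau_k}{2}g_v\r)+\OO(\tau_k^2)$ and invokes Lemma~5.1 to obtain a final constant of the form $(1+3KT+\cdots)(e^{GT}+\cdots)$, whereas you bound these factors directly via a Neumann series estimate and arrive at a single exponential $e^{(3K+G)T+\cdots}$; both yield constants independent of $\ell$ and $\tau_\ell$, so the arguments are equivalent.
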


\begin{proof}
By definition we have
\bbbb
v_{\ell+1} & = & \prod_{\sigma=1}^3\l(I-\frac{\tau_\ell}{2}M_\sigma\r)^{-1}\l(I+\frac{\tau_\ell}{2}M_\sigma\r)
\l(v_\ell+\frac{\tau_\ell}{2}g(v_\ell)\r) + \frac{\tau_\ell}{2}g(v_{\ell+1})\\
& = & \Phi_\ell \l(v_\ell+\frac{\tau_\ell}{2}g(v_\ell)\r) + \frac{\tau_\ell}{2}g(v_{\ell+1}),
\eeee
where
$$\Phi_\ell = \prod_{\sigma=1}^3\l(I-\frac{\tau_\ell}{2}M_\sigma\r)^{-1}\l(I+\frac{\tau_\ell}{2}M_\sigma\r).$$
It follows that
\bbbb
z_{\ell+1} & = & \Phi_\ell z_\ell + \frac{\tau_\ell}{2}\Phi_\ell(g(v_\ell) - g(\tilde{v}_\ell)) + \frac{\tau_\ell}{2}(g(v_{\ell+1})-g(\tilde{v}_{\ell+1}))\\
& = & \Phi_\ell z_\ell + \frac{\tau_\ell}{2}\Phi_\ell g_v(\xi_\ell)z_\ell + \frac{\tau_\ell}{2}g_v(\xi_{\ell+1})z_{\ell+1},\\
\eeee
where $\xi_k\in\LL(v_k,\tilde{v}_k),~k=\ell,\ell+1.$ Rearranging the above equality, we have
$$\l(I-\frac{\tau_\ell}{2}g_v(\xi_{\ell+1})\r)z_{\ell+1}  
= \Phi_\ell\l(I+\frac{\tau_\ell}{2}g_v(\xi_\ell)\r)z_\ell.$$
Further, recall \R{cond1}. When $\tau_k$ is sufficiently small we may claim that
$$\l(I-\frac{\tau_k}{2}g_v(\xi)\r)^{-1},~I+\frac{\tau_k}{2}g_v(\xi)
=E\l(\frac{\tau_k}{2}g_v(\xi)\r)+\OO\l(\tau_k^2\r).$$
Thus,
\bbbb
z_{\ell+1} & = & \l(I-\frac{\tau_\ell}{2}g_v(\xi_{\ell+1})\r)^{-1}\Phi_\ell\l(I+\frac{\tau_\ell}{2}g_v(\xi_\ell)\r)z_\ell\nnn\\
& =& \l\{\prod_{k=0}^\ell \l[E\l(\frac{\tau_k}{2}g_v(\xi_{k+1})\r)+\OO\l(\tau_k^2\r)\r]\Phi_k \l[E\l(\frac{\tau_k}{2}g_v(\xi_k)\r)+\OO\l(\tau_k^2\r)\r]\r\}z_0.\label{rec1}
\eeee
It follows therefore
\bbbb
\|z_{\ell+1}\|_2 & \le & \l\|\Phi_k\r\|_2\l\{\prod_{k=0}^\ell \l\|E\l(\frac{\tau_k}{2}g_v(\xi_{k+1})\r)\r\|_2\l\|E\l(\frac{\tau_k}{2}g_v(\xi_k)\r)\r\|_2+c_{1,k}\tau_k^2 \r\}\|z_0\|_2\nnn\\
& \le & \l(1 + 3KT + c\sum_{k=0}^{\ell} \tau_k^2\r)\l(e^{GT} + c_1\sum_{k=0}^\ell \tau_k^2\r)\|z_0\|_2 ~ \le ~ \tilde{c}\,\|z_0\|_2,\label{stb1}
\eeee
where $c_{1,k},~k=1,2,\ldots,\ell,$ are positive constants and 
$c,c_1,\tilde{c}$ are positive constants independent of $\ell$ and $\tau_{\ell},~\ell>0.$ 
Thus giving the desired stability.
\end{proof}

The above theorem provides further insight as to why the standard linear analysis can be useful in estimating the nonlinear stability. 
The extra cost paid, however, is assuming the boundedness of $\|g_v(\xi)\|_2.$ Nevertheless, this is an improvement upon the
traditional methodology of having the nonlinear source term frozen. In fact, the aforementioned bound is well-maintained 
in numerical experiments until certain neighborhoods of quenching are reached. This serves as an indication that the new
analysis is valid and effective.

\section{Conclusions} \clearallnum
A semi-adaptive LOD scheme is developed for solving degenerate Kawarada equations possessing a strong quenching nonlinearity 
and singularity. While a temporal adaptation is performed via an arc-length monitoring mechanism of the temporal derivative of the solution, fixed nonuniform spatial grids are adopted. The novel splitting method is implicit and 
the impact of the degeneracy is found to be limited. Rigorous analysis is given for key computational
features, including the positivity, monotonicity, and stability, of the numerical solution.  Important criteria to guarantee these properties, which depend upon the variable steps and degenerate function, are obtained. 

Under much weaker requirements (see the latest results in \cite{Beau1}), 
the temporal step restriction for guaranteeing monotone numerical solutions of our LOD scheme has been reduced to 
only one-half of those in uniform spatial mesh cases \cite{Sheng21}. 
Furthermore, a realistic method of targeting the realization of nonlinear stability analysis is proposed and shown to be successful. 
Though this new strategy needs the 
boundedness of $\|g_v(\xi)\|_2,$ the requirement is well-justified before quenching is reached. This improved methodology not only provides further insight into the stability,
but also offers explanations as to why the linear stability analysis must be valid before quenching. 
On the other hand, simulations of real three-dimensional
solutions still remain as one of the most challenging tasks. 
In anticipated future work we plan to utilize the latest
{\em High Performance Computing\/}
tools with large data computations for this purpose. More rigorous and generalized analysis, as well as
non-exponential splitting based higher order splitting methods 
\cite{Sheng1,Sheng15b}  
will also be be investigated, studied, and experimented with.

\vspace{8mm}

\no{\large\bf References}

\end{document}